\theoremstyle{definition}
\newtheorem{rmk}{Remark}[section]
\theoremstyle{plain}
\newtheorem{lem}{Lemma}[section]
\newtheorem{thm}{Theorem}[section]
\newtheorem{prop}{Proposition}[section]
\def\En{\mathbb{N}}
\def\Ar{\mathbb{R}}
\def\Pi{\mathbb{P}}
\def\beq{\begin{equation}}
\def\eeq{\end{equation}}
\def\rarr{\rightarrow}
\def\impl{\Rightarrow}
\def\aand{\text{\quad and \quad}}
\def\for{\text{\quad for \quad}}
\def\as{\text{\quad as \quad}}
\def\l|{\left\|}
\def\r|{\right\|}
\def\L2{L^2(\Omega)}
\def\H1{H^1(\Omega)}
\def\nL2#1{\l|#1\r|_{\L2}}
\def\nH1#1{\l|#1\r|_{\H1}}
\def\rarrw{\rightharpoonup}
\def\gL2{L^2(\Gamma_1)}
\def\ngL2#1{\l|#1\r|_{\gL2}}
\begin{document}
\begin{center}
   {\huge\rm A doubly nonlinear evolution problem\\[0.3cm]
   related to a model for microwave heating\footnote{{\bf Acknowledgment.}\quad\rm The author is grateful to Pierluigi Colli for suggesting the problem and for his expert advice and fundamental support throughout this project.}}
\\[1.5cm]
 {\bf Luca Scarpa}\\[0.2cm]
 Dipartimento di Matematica "F. Casorati"\\
 Universit\`a di Pavia \\
 Via Ferrata 1, 27100 Pavia, Italy\\
 E-mail: {\tt luca.scarpa01@ateneopv.it}
                   \end{center}
       \vspace{1cm}
         
\begin{abstract}
This paper is concerned with the existence and uniqueness of the solution to a doubly nonlinear parabolic problem
  which arises directly from a circuit model of microwave heating.
  Beyond the relevance from a physical point of view, the problem is very interesting also in a mathematical approach: in fact, it consists of a
  nonlinear partial differential equation with a further nonlinearity in the boundary condition. Actually, we are going to prove a general result:
  the two nonlinearities are allowed to be maximal monotone operators and then an existence result will be shown for the resulting problem.
\\[.5cm]
{\bf AMS Subject Classification:} 35K55, 35K61, 35Q79\\[.5cm]
{\bf Key words and phrases:} nonlinear parabolic equation, nonlinear boundary condition, existence of solutions.
\end{abstract}

\pagestyle{myheadings}
\newcommand\testopari{\sc Luca Scarpa}
\newcommand\testodispari{\sc A doubly nonlinear evolution problem for microwave heating}
\markboth{\testodispari}{\testopari}


\thispagestyle{empty}

\section{Introduction}
\setcounter{equation}{0}

In this paper, we deal with a problem that stems from a circuit model of microwave heating (see \cite{reim-jor-min-var} for details);
in particular, we aim at proving the existence of a solution for a nonlinear partial differential equation with appropriate initial and boundary conditions.
More specifically, we consider a model of a RLC circuit in which a thermistor has been inserted: this one has a cylindrical shape and takes into account the
temperature's effects. A system of three equations is obtained in~\cite{reim-jor-min-var}; this system involves
the voltage $V$ across the capacitor, resistor and inductance, the potential $\Phi$ associated to the electrostatic field in the thermistor and the temperature~$\vartheta$.
In order to prove the existence of a solution of the entire system, the first step is showing the existence of a solution for the equation for $\vartheta$, 
complemented by a special boundary condition and initial condition.
This is the aim of our work.

Firstofall, the thermistor is modelled by a cylinder $C_l\subseteq\Ar^3$:
\beq
  \label{therm}
  D:=B_R(0)\subseteq\Ar^2\,, \quad C_l:=D\times(0,l)\,, \quad l,R>0
\eeq
\beq
  \label{boundary}
  B:=\partial D\times(0,l)\,, \quad
  D_I:=D\times\{0\}\,, \quad D_F:=D\times\{l\}\,.
\eeq
The equation in $\vartheta$ which arises from the model is the following (see \cite{reim-jor-min-var})
\beq
  \label{eq_theta}
  c_0\vartheta_t-\Delta K(\vartheta)=g \quad\text{in}\quad C_l\times(0,T)\,,
\eeq
where $T>0$, $c_0>0$, $g\in L^2(C_l\times(0,T))$ and $K:\Ar\rarr\Ar$ is a Lipschitz-continuous increasing function such that $K(0)=0$.
Please note that if we write \eqref{eq_theta} using the variable $u=K(\vartheta)$, we obtain
\beq
  \label{eq_u}
  c_0\left(\gamma(u)\right)_t-\Delta u=g  \quad\text{in}\quad C_l\times(0,T)\,,
\eeq
where 
\beq
  \label{K_op}
  \gamma:=K^{-1}:\Ar\rarr\Ar
\eeq
is the inverse graph of $K$. In the application we are considering, $\gamma$
is still a Lipschitz-continuous increasing function with $\gamma(0)=0$.

Finally, the appropriate boundary conditions arising from the model are
\beq
  \label{bound_cond_u}
  -\frac{\partial u}{\partial n}=
  \begin{cases}
    \beta(u)-\beta(u_B) \quad&\text{in $B\times(0,T)$}\,,\\
    0 \quad&\text{in $D_I\cup D_F\times(0,T)$}\,,
  \end{cases}
\eeq
where $u_B\in\Ar$ is a datum and
\beq
  \label{bound_op}
  \beta:\Ar\rarr\Ar\,, \quad \beta(r)=h\gamma(r)+s\left|\gamma(r)\right|^3 \gamma(r)\,, \quad h,s>0\,.
\eeq
Note that $\beta$ is a continuous increasing function such that $\beta(0)=0$.

In this paper, we are interested to discuss the initial and boundary value problem
\begin{gather}
\label{problem_1}
    c_0\left(\gamma(u)\right)_t-\Delta u=g  \quad\text{in}\quad C_l\times(0,T)\\
\label{problem_2}
    -\frac{\partial u}{\partial n}= \beta(u)-\beta(u_B) \quad\text{in $B\times(0,T)$}\\
\label{problem_3}
    -\frac{\partial u}{\partial n}=0 \quad\text{in $D_I\cup D_F\times(0,T)$}\\
\label{problem_4}
    \left(\gamma(u)\right)(0)=v_0 \quad\text{in}\quad C_l\,.
\end{gather}
The system \eqref{problem_1}--\eqref{problem_4} is indeed very interesting from a physical
point of view. As a matter of fact, the variable $u$ is strictly correlated to the temperature $\vartheta$ (we recall that $u=K(\vartheta)$):
more specifically, the boundary condition \eqref{problem_3} tells that the flux across the top and the bottom of the cylindrical thermistor is null,
while \eqref{problem_2} specifies the relation between the flux across the lateral surface of the thermistor and the environment's temperature.

Moreover, beyond the physical view, the problem \eqref{problem_1}--\eqref{problem_4} is relevant in a mathematical perspective as well. In
fact, we are concerned with a partial differential system with a double nonlinearity: the first is contained in the main equation \eqref{problem_1}, 
within the derivative with respect to $t$, and the second appears in the boundary condition \eqref{problem_2}. In this work, we are going to prove
the existence of a solution of a
more general problem, in which \eqref{problem_1}--\eqref{problem_4} appears as a special case.
More precisely, we will consider an open set $\Omega\subseteq\Ar^n$ instead of the three-dimensional
 cylinder  $C_l$, and we will write $\partial\Omega=\Gamma_0\cup\Gamma_1$,
where $\Gamma_0\cap\Gamma_1=\emptyset$: $\Gamma_0$ will be the generalization of $D_I\cup D_F$, while $\Gamma_1$ replaces $B$.
Furthermore, we will deal with two maximal monotone operators $\gamma$ and $\beta$ instead of the single-valued
increasing functions \eqref{K_op} and \eqref{bound_op} respectively.

In the second section, we will illustrate the main results and the strategy of the proofs: relying on an important result by Di Benedetto - Showalter (see \cite{diben-show}),
we will obtain a first lemma, which proves the existence for a simplified problem, and then we will use this to prove the theorem ensuring 
the existence of a solution to \eqref{problem_1}--\eqref{problem_4}. Finally, we will present a continuous dependence and uniqueness result.

The third section is devoted to the proof of the lemma: it consists essentially in checking the hypotheses of the result contained in \cite{diben-show}
for our specific framework. In this sense, the most interesting point is the use of a fixed point argument in order to control the maximal monotonicity
of a particular operator.

The fourth section contains the proof of the existence of a solution to our problem: we will show some uniform estimates in order to pass to
the limit accurately and thus obtain the desired result.

The fifth section contains the proof of the continuous dependence and uniqueness result: this one is provided in a simplified setting,
in which a linearity assumption for $\gamma$ is in order.


\section{Main results}
\setcounter{equation}{0}

In this section, we will focus our attention on the problem \eqref{problem_1}--\eqref{problem_4}
and we will illustrate the two main results which prove the existence of a solution and the continuous dependence theorem.
As we have anticipated, we abandon the specific framework of the previous section and consider a more abstract setting: in this sense, the most important generalization consists
in dealing with two general maximal monotone operators on $\Ar$, instead of considering the given single-valued increasing functions \eqref{K_op} and 
\eqref{bound_op}. Furthermore,
we also relax the hypothesis on the cylindrical shape of domain \eqref{therm}.

More precisely, we now work in the following setting:
\beq
  \label{domain}
  \Omega\subseteq\Ar^n \quad \text{smooth bounded domain}\,, \quad
  \Gamma:=\partial\Omega\,, \quad \Gamma=\Gamma_0\cup\Gamma_1\,, \quad \Gamma_0\cap\Gamma_1=\emptyset\,;
\eeq
\beq
  \label{gamma}
  \gamma:\Ar\rarr2^{\Ar} \quad \text{maximal monotone}\,;
\eeq
\beq
  \label{beta}
  \beta:\Ar\rarr2^{\Ar} \quad \text{maximal monotone}\,, \quad 0\in D(\beta)\,.
\eeq
As we have anticipated, \eqref{domain} is the natural generalization of environment \eqref{therm}-\eqref{boundary}: $\Gamma_0$ 
here plays the role of $D_I\cup D_F$,
while $\gamma$ and $\beta$ are the extensions of \eqref{K_op} and \eqref{bound_op}, respectively. Hence, it will be natural to assume good properties on $\gamma$
(such as linear boundedness), while $\beta$ should be treated as generally as possible, at least at the beginning.
The problem we are dealing with is the following:
\beq
  \label{eq}
  \frac{\partial}{\partial t}v-\Delta u=g \quad \text{in } \Omega\times(0,T)
\eeq
\beq
  \label{eq_v}
  v\in\gamma(u) \quad \text{in } \Omega\times(0,T)
\eeq
\beq
  \label{eq_gamma}
  -\frac{\partial u}{\partial n}\in\beta(u)-h \quad \text{in } \Gamma_1\times(0,T)\,, \qquad
  \frac{\partial u}{\partial n}=0 \quad \text{in } \Gamma_0\times(0,T)
\eeq
\beq
  \label{init_v}
  v(0)=v_0 \quad\text{in}\quad\Omega\,,
\eeq
where $g\in L^2(0,T;L^2(\Omega))$ and $h\in L^2(0,T;L^2(\Gamma_1))$. As a remark, note that $\frac{\partial}{\partial n}$ 
indicates the outward normal derivative on $\Gamma$
and that we have used no particular notation for the traces on $\Gamma_1$ or $\Gamma_0$ (the contest is quite clear).

We would like to write a variational formulation of problem \eqref{eq}--\eqref{init_v}.
Note that for all $z\in H^1(\Omega)$, integrating by parts and taking into account condition  \eqref{eq_gamma}, we have
\[
    -\int_\Omega{\Delta u(t) z\,dx}=
        \int_\Omega{\nabla u(t)\cdot\nabla z\, dx}+\int_{\Gamma_1}{\left(\xi(t)-h(t)\right)z\,ds}\,,
\]
for some $\xi(t)\in\beta(u(t))$ a.e. on $\Gamma_1$. Hence,
testing equation \eqref{eq} by an arbitrary function $z\in H^1(\Omega)$ and integrating, we reach
the following variational formulation:
\beq
  \label{var_for}
    \begin{split}
      \left<\frac{\partial v}{\partial t}(t),z\right>&+\int_\Omega{\nabla u(t)\cdot\nabla z\, dx}\,+
      \int_{\Gamma_1}{\xi(t)z\,ds}=\int_\Omega{g(t)z\,dx}+\int_{\Gamma_1}{h(t)z\,ds} \\
      & \text{ for almost all } t\in(0,T) \aand  \forall z\in H^1(\Omega)\,,
    \end{split}
\eeq
where
\beq
\label{eq_v_var}
  v\in\gamma(u) \quad \text{a.e. in } \Omega\times(0,T)\,,
\eeq
\beq
  \label{eq_gamma1_var}
  \xi\in\beta(u) \quad \text{a.e. in } \Gamma_1\times(0,T)\,.
\eeq

In order to prove the existence of a solution for problem \eqref{eq}--\eqref{init_v}, the idea is to accurately use an abstract result by Di Benedetto - Showalter,
which we briefly remind (see \cite{diben-show} for further details). In particular, in this sense, we will be forced to require more regularity on $\gamma$; however, as we have anticipated,
our assumptions will be acceptable if we keep in mind our application.
\begin{thm}
  \label{ben_show}
  Let $W$ be a reflexive Banach space and $V$ a Hilbert space which is dense and embedded compactly in $W$. Denote the injection
  by $i:V\rarr W$ and the dual operator (restriction) by $i^*:W^*\rarr V^*$. Define $\mathscr{A}:=i^*\circ\partial\phi\circ i$. Assume the following:
  \begin{itemize}
    \item{[A1]} The real-valued function $\phi$ is proper, convex and lower semicontinous on $W$, continuous at some point of $V$,
      and $\partial\phi\circ i:V\rarr W^*$ is bounded.
    \item{[A2]} The operator $\partial\phi\circ i:L^2(0,T;V)\rarr L^2(0,T;W^*)$ is bounded.
    \item{[B1]} The operator $\mathscr{B}:V\rarr V^*$ is maximal monotone and bounded.
    \item{[B2]} The operator $\mathscr{B}:L^2(0,T;V)\rarr L^2(0,T;V^*)$ is bounded and coercive, i.e.
      \beq
        \label{coerc}
        \lim_{\substack{\l|u\r|_{L^2(0,T;V)}\rarr+\infty \\ [u,v]\in\mathscr{B}}}{\frac{\int_0^T{\left<v(t),u(t)\right>}\,dt}{\l|u\r|_{L^2(0,T;V)}}}=+\infty\,.
      \eeq
  \end{itemize}
  Then, for each $f\in L^2(0,T;V^*)$ and $v_0\in Rg(\mathscr{A})$, there exists a triple $u\in L^2(0,T;V)$, $v\in H^1(0,T;V^*)$, $w\in L^2(0,T;V^*)$
  such that
    \begin{gather}
    \label{bs1}
       \frac{d}{dt}v(t)+w(t)=f(t)\,, \quad v(t)\in\mathscr{A}(u(t))\,, \quad w(t)\in\mathscr{B}(u(t)) \quad \text{a.e. } t\in[0,T] \\
       v(0)=v_0\,.
    \end{gather}
\end{thm}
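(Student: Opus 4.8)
The plan is to recognise $\mathscr{A}$ as a subdifferential on $V$ and to exploit the resulting energy structure. Since $i:V\rarr W$ is a continuous linear injection and, by [A1], $\phi$ is continuous at a point of $i(V)$, the chain rule for subdifferentials gives $\mathscr{A}=i^*\circ\partial\phi\circ i=\partial\Phi$, where $\Phi:=\phi\circ i:V\rarr(-\infty,+\infty]$ is proper, convex and lower semicontinuous on the Hilbert space $V$. Denoting by $\Phi^*:V^*\rarr(-\infty,+\infty]$ its convex conjugate, the inclusion $v\in\mathscr{A}(u)$ is equivalent to $u\in\partial\Phi^*(v)$, and the hypothesis $v_0\in Rg(\mathscr{A})$ is precisely $\Phi^*(v_0)<+\infty$. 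The key tool will be the chain rule for convex functionals along absolutely continuous curves: whenever $v\in H^1(0,T;V^*)$, $u\in L^2(0,T;V)$ and $u(t)\in\partial\Phi^*(v(t))$ for a.e.\ $t$, the map $t\mapsto\Phi^*(v(t))$ is absolutely continuous and \[ \frac{d}{dt}\Phi^*(v(t))=\left<\frac{dv}{dt}(t),u(t)\right>\quad\text{for a.e. } t\in(0,T)\,. \] This identity is the bridge between the two nonlinearities: it lets me evaluate $\int_0^T\left<\frac{dv}{dt},u\right>dt=\Phi^*(v(T))-\Phi^*(v_0)$ and, through the lower semicontinuity of $\Phi^*$, survive the passage to the limit.

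I would then construct approximate solutions by implicit time discretisation: splitting $[0,T]$ into $N$ steps of length $\tau$, setting $v^0=v_0$, and solving at each step the stationary inclusion $\mathscr{A}(u^k)+\tau\mathscr{B}(u^k)\ni v^{k-1}+\tau f^k$. The operator $\mathscr{A}+\tau\mathscr{B}$ is maximal monotone, being the sum of the maximal monotone $\mathscr{A}=\partial\Phi$ and of the everywhere-defined, bounded, maximal monotone $\mathscr{B}$ of [B1]; after a coercive regularisation (adding $\epsilon$ times the Riesz map of $V$, to be removed afterwards) it is also coercive, hence surjective by the Browder--Minty theorem, so each step is solvable. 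Building the piecewise-constant and piecewise-linear interpolants $\overline u_\tau,\overline w_\tau,v_\tau$ and testing the discrete equation with $u^k$, the convexity inequality $\left<v^k-v^{k-1},u^k\right>\ge\Phi^*(v^k)-\Phi^*(v^{k-1})$ telescopes, so the cumulated increment term is bounded below by $\Phi^*(v_\tau(t))-\Phi^*(v_0)$; together with the coercivity [B2] and the boundedness [A2], absorbing the data through Young's inequality, this yields bounds, uniform in $\tau$ (and $\epsilon$), for $\overline u_\tau$ in $L^2(0,T;V)$, for $\overline w_\tau$ in $L^2(0,T;V^*)$, for $v_\tau$ in $H^1(0,T;V^*)$, and for $\sup_t\Phi^*(v_\tau(t))$.

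Along a subsequence I then obtain $\overline u_\tau\rarrw u$ in $L^2(0,T;V)$, $\overline w_\tau\rarrw w$ in $L^2(0,T;V^*)$ and $v_\tau\rarrw v$ in $H^1(0,T;V^*)$. By [A2] the $W^*$-representatives of $v_\tau$ are bounded in $L^2(0,T;W^*)$; since $i$ is compact, so is $i^*:W^*\rarr V^*$, and an Aubin--Lions--Simon argument (using the uniform $H^1(0,T;V^*)$ bound) upgrades this to strong convergence $v_\tau\rarr v$ in $L^2(0,T;V^*)$ and in $C([0,T];V^*)$, so that $v(0)=v_0$. The strong--weak pairing gives $\int_0^T\left<v_\tau,\overline u_\tau\right>\rarr\int_0^T\left<v,u\right>$, whence the weak--strong closedness of the maximal monotone $\mathscr{A}$ identifies $v\in\mathscr{A}(u)$ first. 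The genuine difficulty — the point where the doubly nonlinear coupling bites — is then to identify $w\in\mathscr{B}(u)$: this needs $\limsup_\tau\int_0^T\left<\overline w_\tau,\overline u_\tau\right>\le\int_0^T\left<w,u\right>$, which I would get by writing $\int_0^T\left<\overline w_\tau,\overline u_\tau\right>=\int_0^T\left<f,\overline u_\tau\right>-\int_0^T\left<\partial_t v_\tau,\overline u_\tau\right>$, estimating the last term from below through the discrete energy by $\Phi^*(v_\tau(T))-\Phi^*(v_0)$, and combining $\liminf_\tau\Phi^*(v_\tau(T))\ge\Phi^*(v(T))$ with the chain rule for the already-identified limit, $\int_0^T\left<\partial_t v,u\right>=\Phi^*(v(T))-\Phi^*(v_0)$. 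Feeding this into the Minty argument for $\mathscr{B}$ closes the proof, the whole subtlety lying in synchronising the lower semicontinuity of $\Phi^*$ with the two uses of the energy identity; see \cite{diben-show} for the complete argument.
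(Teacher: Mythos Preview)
The paper does not prove this theorem: it is quoted from \cite{diben-show} and used as a black box in the proof of Theorem~\ref{lemma}, so there is no argument in the paper to compare against. Your outline is a faithful sketch of the strategy in \cite{diben-show} itself (implicit time discretisation, energy estimates through the conjugate $\Phi^*$, Aubin--Lions compactness via the compact injection $i^*$, and a Minty-type identification of the two nonlinear limits), and you correctly defer to that reference for the details. One minor imprecision: the condition $v_0\in Rg(\mathscr{A})$ means $v_0\in D(\partial\Phi^*)$, which implies but is not equivalent to $\Phi^*(v_0)<+\infty$; since only the latter is needed for the energy estimate, this does not affect your argument.
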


In our specific framework, we want to apply Theorem \ref{ben_show} to the spaces $V=H^1(\Omega)$ and $W=L^2(\Omega)$:
to be precise, in the development of the work we will make the identification $L^2(\Omega)\cong L^2(\Omega)'$.
In the notation of the theorem, the clear choice of $f$ is the following:
\beq
  \label{f}
  \left<f(t),z\right>=\int_\Omega{g(t)z\,dx}+\int_{\Gamma_1}{h(t)z\,ds}\,, \quad \forall z\in H^1(\Omega)\,, \quad  \forall t\in[0,T]\,.
\eeq
Furthermore, as far as $\mathscr{A}$ is concerned, it is natural to introduce
\begin{gather}
  \label{Abar}
  \bar{\mathscr{A}}(u)=\{v\in L^2(\Omega)\,: v(x)\in\gamma(u(x)) \quad \text{a.e. } x\in\Omega\}\,, \\
  D(\bar{\mathscr{A}})=\{u\in H^1(\Omega)\,: \exists v\in L^2(\Omega) \quad\text{ such that } v(x)\in\gamma(u(x)) \quad \text{a.e. } x\in\Omega\}\,;
\end{gather}
thus, our choice of the operator $\mathscr{A}$ is
\beq
  \label{A}
  \mathscr{A}:=i^*\circ\bar{\mathscr{A}}\,, \quad D(\mathscr{A})=D(\bar{\mathscr{A}})\,.
\eeq
However, while most of the assumptions of the theorem are satisfied by these particular choices of $V$, $W$, $f$ and $\mathscr{A}$,
on the other side, the intuitive way of considering $\mathscr{B}$, i.e.
\begin{gather}
  \label{B_first}
  \left<\mathscr{B}(u),z\right>=\int_\Omega{\nabla u\cdot\nabla z\, dx}+
      \int_{\Gamma_1}{\xi z\,ds}\,, \quad \xi\in\beta(u) \text{ a.e. on } \Gamma_1\,, \quad \forall z\in H^1(\Omega)\,, \\
  D(\mathscr{B})=\{u\in\H1\,: \exists\xi\in\gL2\quad\text{ such that } \xi\in\beta(u) \text{ a.e. on } \Gamma_1\}\,,
\end{gather}
gives us many problems: in particular, the coercivity condition \eqref{coerc} is not evident at this level. So, we modify
expression \eqref{B_first} by adding a "correction term", which we hope will give us the required coercivity: more precisely, we consider
the following regularization of~$\mathscr{B}$,
\beq
  \label{B_def}
  \left<\mathscr{B}_\lambda(u),z\right>=\lambda\int_\Omega{uz}+\int_\Omega{\nabla u\cdot\nabla z\, dx}+
      \int_{\Gamma_1}{\beta_\lambda(u)z\,ds}\,, \quad \forall z\in H^1(\Omega)\,,
\eeq
where $\lambda>0$ is fixed and $\beta_\lambda$ indicates the Yosida approximation of $\beta$ (please note that we are not using a specific
notation for the traces on $\Gamma_1$). The idea is to apply Theorem~\ref{ben_show} to $\mathscr{B_\lambda}$ instead of $\mathscr{B}$, for $\lambda>0$
fixed. As a consequence, the theorem itself will not directly give us a solution $u$ for 
problem \eqref{var_for}--\eqref{eq_gamma1_var}, but we will obtain a solution $u_\lambda$ satisfying the respective modified problem.
The second step will be to prove estimates on $u_\lambda$ independent of $\lambda$, and then passing to the limit as $\lambda\rarr0$ and
finding a solution of the original problem.

\begin{rmk}
\label{properties}
  Let us recall some properties of the Yosida approximation which will be useful in the following sections:
  for further details about these remarks, the reader can refer to \cite{barbu,brezis}.
  
  Let $(H,\l|\cdot\r|)$ be a Hilbert space and $A$ a maximal monotone operator on $H$: then, for all $\lambda>0$,
  the Yosida approximation $A_\lambda$ of $A$ is defined as
  \beq
    \label{yos}
    A_\lambda=\frac{I-J_\lambda}{\lambda}\,, \quad \text{where $J_\lambda=\left(I+\lambda A\right)^{-1}$ is the resolvent}\,,
  \eeq
  while for all $x\in D(A)$, we will write $A^0x$ for the minimum-norm element of $Ax$. With these notations, the following properties hold:
  \begin{itemize}
    \item $A_\lambda$ is a monotone and single-valued operator such that for all $x\in H$, $A_\lambda x\in AJ_\lambda x$;
    \item $A_\lambda$ is Lipschitz continuous (thus maximal monotone) with Lipschitz constant $\leq\frac{1}{\lambda}$;
    \item for all $\mu,\lambda>0$, $\left(A_\mu\right)_\lambda=A_{\mu+\lambda}$
    \item for all $x\in D(A)$, $\left|A_\lambda x\right|\leq\left|A^0x\right|$ and $\lambda\mapsto\left|A_\lambda x\right|$ is 
             non increasing as $\lambda\searrow0$;
    \item as $\lambda\searrow0$, $\{A_\lambda x\}$ is bounded if and only if $x\in D(A)$, and in this case $A_\lambda x\rarr A^0x$.
  \end{itemize}
  Furthermore, we will use the following result: if $\varphi$ is a convex, proper and lower semicontinuous function on $H$ such that $A=\partial\varphi$,
  then
  \beq
    \label{domini}
    D(A)\subseteq D(\varphi)\subseteq \overline{D(\varphi)}\subseteq\overline{D(A)}\,.
  \eeq
  Finally, if we define
  \beq
    \varphi_\lambda(x)=\min_{y\in H}\left\{\frac{1}{2\lambda}\left|y-x\right|^2+\varphi(y)\right\}\,, \quad x\in H\,, \quad\lambda>0\,,
  \eeq
  then $\varphi_\lambda$ is convex, Fréchet-differentiable with differential $A_\lambda$ and we have
  \beq
    \label{prop_yos}
    \varphi_\lambda(x)\nearrow\varphi(x) \quad\text{as } \lambda\searrow0 \quad\forall x\in H\,.
  \eeq
\end{rmk}

To summarize, the results we are going to present in this section could be briefly described as follow:
the first theorem tells us that Theorem \ref{ben_show} can be applied with the particular choices \eqref{f}, \eqref{A} and \eqref{B_def},
the second theorem gives us a solution for the original problem  \eqref{var_for}--\eqref{eq_gamma1_var}, and the third ensures that 
a continuos dependence and uniqueness result holds.

\begin{thm}
  \label{lemma}
  Let $\Omega$, $\gamma$, $\beta$ be as in \eqref{domain}--\eqref{beta}, and also suppose that
  \beq
     \label{00}
     D(\gamma)=\Ar\,, \quad \gamma(0)\ni0\,, \quad \beta(0)\ni0\,,
  \eeq
  \beq
    \label{lin_bound}
    \exists\,C_1, C_2>0 \text{ such that } \left| y\right|\leq C_1\left|x\right|+C_2 \quad \forall x\in\Ar\,, \quad \forall y\in\gamma(x)\,.
  \eeq
  For $g\in L^2(0,T;L^2(\Omega))$ and $h\in L^2(0,T;L^2(\Gamma_1))$,
  let $f$, $\mathscr{A}$ and $\mathscr{B}_\lambda$ as in \eqref{f}, \eqref{A} and \eqref{B_def}. Then, for any given pair
  \beq
    \label{init_pair}
    (u_0,v_0)\in\left(\L2\right)^2 \quad \text{such that}\quad\text{$v_0\in\gamma(u_0)$ a.e. in $\Omega$}\,,
  \eeq
   there exist $u_\lambda\in L^2(0,T;H^1(\Omega))$
  and $v_\lambda\in H^1(0,T;H^1(\Omega)')\cap L^2(0,T;\L2)$ such that
  \beq
    \label{lem1}
    \frac{\partial}{\partial t}v_\lambda(t)+\mathscr{B}_\lambda (u_\lambda(t))=f(t) \quad\text{for a.e. } t\in[0,T]\,,
  \eeq
  \beq
    \label{lem2}
    v_\lambda(t)\in\mathscr{A}(u_\lambda(t)) \quad \text{for a.e. t }\in [0,T]\,,
  \eeq
  \beq
    \label{lem3}
   v_\lambda(0)=v_0\,.
  \eeq
\end{thm}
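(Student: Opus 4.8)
The plan is to apply Theorem \ref{ben_show} with the Hilbert triple $V=H^1(\Omega)\hookrightarrow W=L^2(\Omega)\cong W^*\hookrightarrow V^*$, with $f$ as in \eqref{f}, $\mathscr{A}$ as in \eqref{A}, and $\mathscr{B}_\lambda$ from \eqref{B_def} playing the role of $\mathscr{B}$; the injection $i:V\to W$ is the compact, dense Sobolev embedding, so the structural requirements on $V,W,i$ hold. The whole argument then reduces to three tasks: (i) realizing $\bar{\mathscr{A}}$ as a subdifferential $\partial\phi$ and verifying [A1]--[A2]; (ii) verifying [B1]--[B2] for $\mathscr{B}_\lambda$; (iii) checking the compatibility $v_0\in Rg(\mathscr{A})$. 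Once these are done, the conclusion \eqref{bs1} of Theorem \ref{ben_show} reads off directly as \eqref{lem1}--\eqref{lem3}.

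For (i), since $\gamma$ is maximal monotone on $\Ar$ with $\gamma(0)\ni0$ by \eqref{00}, I would write $\gamma=\partial j$ for a convex, proper, lower semicontinuous $j:\Ar\to[0,+\infty)$ with $j(0)=0$, and set $\phi(u)=\int_\Omega j(u)\,dx$ on $W$. A classical result (see \cite{brezis,barbu}) identifies $\partial\phi=\bar{\mathscr{A}}$, the realization \eqref{Abar}. The hypothesis $D(\gamma)=\Ar$ together with the linear bound \eqref{lin_bound} forces $j$ to be finite with at most quadratic growth, so $\phi$ is finite and continuous on all of $W$, hence continuous at points of $V$; and \eqref{lin_bound} gives at once $\|\bar{\mathscr{A}}(u)\|_{\L2}\le C_1\|u\|_{\L2}+C_2|\Omega|^{1/2}$, which yields the boundedness of $\partial\phi\circ i$ demanded in [A1] and its Bochner counterpart [A2]. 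The same estimate shows $D(\bar{\mathscr{A}})=V$ and provides the extra regularity $v_\lambda\in L^2(0,T;\L2)$ claimed in the statement, beyond the $H^1(0,T;V^*)$ given directly by the abstract theorem.

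For (ii), the coercivity [B2] is precisely the purpose of the correction term in \eqref{B_def}. Testing against $u$ gives $\langle\mathscr{B}_\lambda(u),u\rangle=\lambda\|u\|_{\L2}^2+\|\nabla u\|_{\L2}^2+\int_{\Gamma_1}\beta_\lambda(u)u\,ds$; since $0\in\beta(0)$ forces $\beta_\lambda(0)=0$ and $\beta_\lambda$ is monotone (Remark \ref{properties}), the boundary integrand is nonnegative, whence $\langle\mathscr{B}_\lambda(u),u\rangle\ge\min\{\lambda,1\}\|u\|_{\H1}^2$; integrating in time yields \eqref{coerc}. Boundedness of $\mathscr{B}_\lambda$ follows from continuity of the bilinear part and from the Lipschitz bound $\|\beta_\lambda(u)\|_{\gL2}\le\lambda^{-1}\|u\|_{\gL2}\le C\lambda^{-1}\|u\|_{\H1}$ through the trace inequality.

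The main obstacle is the maximal monotonicity of $\mathscr{B}_\lambda$ in [B1]: monotonicity and everywhere-definedness on $V$ are clear, but maximality is delicate. I would establish it through the range condition, i.e. by proving $\mathscr{B}_\lambda+J:V\to V^*$ surjective for the Riesz isomorphism $J$. The coercive bilinear part defines an isomorphism $T:V\to V^*$, so solving $\mathscr{B}_\lambda(u)+Ju=f$ becomes the fixed-point equation $u=T^{-1}(f-N(u))$, where $N(u)$ is the functional $z\mapsto\int_{\Gamma_1}\beta_\lambda(u)z\,ds$. Because the trace map $H^1(\Omega)\to\gL2$ is compact and $\beta_\lambda$ is Lipschitz, $N$ is a compact continuous map, and the a priori bound from the coercivity above lets me close the argument by a Schauder/Leray--Schauder fixed point — this is the technical heart announced in the introduction. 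Alternatively one may invoke the Browder--Minty theorem, $\mathscr{B}_\lambda$ being monotone, everywhere defined and demicontinuous. Having verified [A1]--[B2] and the compatibility $v_0\in Rg(\mathscr{A})$ from \eqref{init_pair}, Theorem \ref{ben_show} furnishes a triple $(u_\lambda,v_\lambda,w_\lambda)$; since $\mathscr{B}_\lambda$ is single-valued, $w_\lambda=\mathscr{B}_\lambda(u_\lambda)$, and \eqref{bs1} reduces exactly to \eqref{lem1}--\eqref{lem3}.
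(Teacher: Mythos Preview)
Your proposal is correct and follows the same global strategy as the paper: apply Theorem~\ref{ben_show} with $V=\H1$, $W=\L2$, the functional $\phi$ induced by a primitive of $\gamma$, and $\mathscr{B}_\lambda$ in place of $\mathscr{B}$, then verify [A1]--[B2]. Your treatment of [A1], [A2], and [B2] matches the paper's almost line by line.

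The only substantive divergence is in how maximal monotonicity of $\mathscr{B}_\lambda$ (the hard part of [B1]) is obtained. The paper argues via the range condition $Rg(\mathscr{R}+\mathscr{B}_\lambda)=\H1'$ but inserts an extra layer: it first truncates $\beta_\lambda$ to a bounded $\beta_\lambda^\epsilon$, so that for each $\epsilon$ the nonlinear boundary term is uniformly bounded independently of the argument; this produces an explicit ball $K_\epsilon\subseteq\H1$ that is invariant under the solution map, and the classical Schauder theorem is applied in the compact ambient space $H^{1-\delta}(\Omega)$. A uniform-in-$\epsilon$ estimate (coming from the sign condition $\beta_\lambda^\epsilon(r)r\ge0$) then allows the passage $\epsilon\to0$. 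Your route bypasses the truncation by using either Leray--Schauder (with the a~priori bound supplied by the same sign condition) or Browder--Minty (since $\mathscr{B}_\lambda$ is monotone, bounded, coercive and---because $\beta_\lambda$ is Lipschitz---demicontinuous on the reflexive space $\H1$). Both of your alternatives are valid and shorter; the paper's version is more self-contained and makes the compactness mechanism (working in $H^{1-\delta}$) completely explicit. Note, though, that the plain Schauder theorem you also mention would \emph{not} close directly without the truncation, since the Lipschitz bound $\|\beta_\lambda(u)\|_{\gL2}\le\lambda^{-1}\|u\|_{\gL2}$ does not by itself yield an invariant ball---this is exactly why the paper introduces~$\beta_\lambda^\epsilon$.
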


\begin{thm}
  \label{theorem}
  Let $\Omega$, $\gamma$, $\beta$, $f$ and $\mathscr{A}$ as in
  \eqref{domain}--\eqref{beta}, \eqref{f} and \eqref{A} respectively, and suppose
  that conditions \eqref{00} and \eqref{lin_bound} of the previous theorem hold. Furthermore, assume that
\begin{gather}
    \label{bilip}
    \gamma \text{ is a bi-Lipschitz continuous function}\nonumber \\
    \text{(i.e., both  $\gamma$ and $\gamma^{-1}$ are Lipschitz continuous)},
\end{gather}
\beq  
\label{ip_beta}
    D(\beta)=\Ar \aand \exists D_1, D_2>0 \text{ such that } \left|\beta^0(r)\right|\leq D_1\hat{\beta}(r)+D_2\,,
\eeq
\beq
    \label{ip_g}
    g\in L^2(0,T;\L2)\cap L^1(0,T;L^\infty(\Omega))\,, \quad h\in L^2(0,T;L^2(\Gamma_1))\,, 
\eeq
  where $\hat{\beta}$ is a proper, convex and lower semicontinuous function such that $\partial\hat{\beta}=\beta$, and
  $\beta^0(r)$ is the minimum-norm element in $\beta(r)$. Then, for any given pair
  \beq
    \label{init_pair2}
    (u_0,v_0)\in\left(\L2\right)^2 \quad \text{such that}\quad\text{$v_0=\gamma(u_0)$ a.e. in $\Omega$}
    \quad\text{and}\quad \hat{\beta}(u_0)\in L^1(\Omega)\,,
  \eeq
 there are $u\in L^2(0,T;H^1(\Omega))$,
  $v\in H^1(0,T;H^1(\Omega)')\cap L^2(0,T;\L2)$, $\xi\in L^2(0,T;\gL2)$ such that
  \beq
    \label{thm1}
    \begin{split}
      \left<\frac{\partial v}{\partial t}(t),z\right>&+\int_\Omega{\nabla u(t)\cdot\nabla z\, dx}+
      \int_{\Gamma_1}{\xi(t)z\,ds}=\int_\Omega{g(t)z\,dx}+\int_{\Gamma_1}{h(t)z\,ds} \\
      & \text{ for a.e. } t\in(0,T) \aand  \forall z\in H^1(\Omega)\,,
    \end{split}
  \eeq
  \beq
    \label{thm2}
    v(t)\in\mathscr{A}(u(t)) \quad \text{for a.e. t }\in(0,T)\,, \quad \xi\in\beta(u) \quad \text{a.e. in } (0,T)\times\Gamma_1\,,
  \eeq
  \beq
    \label{thm3}
    v(0)=v_0\,.
  \eeq
\end{thm}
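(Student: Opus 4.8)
The plan is to obtain the solution as a limit of the regularized solutions produced by Theorem~\ref{lemma}, and so the proof splits into three stages: realization of the approximating problem, uniform (in $\lambda$) a priori estimates, and passage to the limit $\lambda\searrow0$. First I would check that the hypotheses of Theorem~\ref{lemma} hold under the present assumptions: by \eqref{bilip} together with \eqref{00} the function $\gamma$ satisfies $D(\gamma)=\Ar$, $\gamma(0)=0$ and the linear bound \eqref{lin_bound} (indeed $c_1|x|\le|\gamma(x)|\le c_2|x|$ with $c_1,c_2$ the bi-Lipschitz constants), while $\beta(0)\ni0$ holds by \eqref{00}; the pair $(u_0,v_0)$ of \eqref{init_pair2} is admissible since $v_0=\gamma(u_0)$. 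Hence for every $\lambda>0$ Theorem~\ref{lemma} yields $u_\lambda\in L^2(0,T;\H1)$ and $v_\lambda\in H^1(0,T;\H1{}')\cap L^2(0,T;\L2)$ satisfying \eqref{lem1}--\eqref{lem3}; because $\gamma$ is single-valued we have $v_\lambda=\gamma(u_\lambda)$ a.e.

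\textbf{First estimate.} I would test \eqref{lem1} by $z=u_\lambda(t)$. Writing $j(r)=\int_0^r s\,\gamma'(s)\,ds$, the time term becomes $\langle\partial_t v_\lambda,u_\lambda\rangle=\frac{d}{dt}\int_\Omega j(u_\lambda)\,dx$, and $j(r)\ge\frac{c_1}{2}r^2$; the boundary contribution is nonnegative since $\beta_\lambda(u_\lambda)u_\lambda\ge\hat\beta_\lambda(u_\lambda)\ge0$; the right-hand side is handled by Young's inequality and a trace inequality on $\Gamma_1$. After integrating in time and invoking Gronwall's lemma this gives, uniformly in $\lambda$, a bound for $u_\lambda$ (hence for $v_\lambda=\gamma(u_\lambda)$) in $L^\infty(0,T;\L2)\cap L^2(0,T;\H1)$, together with $\sqrt\lambda\,u_\lambda$ bounded in $L^2(0,T;\L2)$ and $\int_0^T\!\int_{\Gamma_1}\hat\beta_\lambda(u_\lambda)$ bounded.

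\textbf{Second estimate (the crux).} The essential point is to control the boundary nonlinearity in $L^2$. Here I would test \eqref{lem1} by $z=\beta_\lambda(u_\lambda(t))$, which is admissible since $\beta_\lambda$ is Lipschitz. Setting $\Theta(r)=\int_0^r\gamma'(s)\beta_\lambda(s)\,ds$, the time term equals $\frac{d}{dt}\int_\Omega\Theta(u_\lambda)\,dx$, with $c_1\hat\beta_\lambda(r)\le\Theta(r)\le c_2\hat\beta_\lambda(r)$; the gradient term $\int_\Omega\beta_\lambda'(u_\lambda)|\nabla u_\lambda|^2$ and the $\lambda$-term are nonnegative, while the boundary term produces exactly $\int_{\Gamma_1}|\beta_\lambda(u_\lambda)|^2$. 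The initial value $\int_\Omega\Theta(u_0)\le c_2\int_\Omega\hat\beta(u_0)$ is finite precisely because of the assumption $\hat\beta(u_0)\in L^1(\Omega)$ in \eqref{init_pair2}; the forcing term $h$ on $\Gamma_1$ is absorbed into the left-hand side by Young's inequality. The delicate contribution is the interior forcing $\int_\Omega g\,\beta_\lambda(u_\lambda)$: bounding it by $\|g(t)\|_{L^\infty(\Omega)}\|\beta_\lambda(u_\lambda(t))\|_{L^1(\Omega)}$ and using \eqref{ip_beta} (which, together with $\hat\beta(J_\lambda u_\lambda)\le\hat\beta_\lambda(u_\lambda)$, controls $\|\beta_\lambda(u_\lambda)\|_{L^1(\Omega)}$ by $C\int_\Omega\Theta(u_\lambda)+C$), the integrability $g\in L^1(0,T;L^\infty(\Omega))$ of \eqref{ip_g} makes this term fit a Gronwall argument in $t\mapsto\int_\Omega\Theta(u_\lambda(t))$. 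The outcome is the uniform bound
\[
\int_0^T\!\!\int_{\Gamma_1}|\beta_\lambda(u_\lambda)|^2\,ds\,dt\le C,
\]
so that $\beta_\lambda(u_\lambda)$ is bounded in $L^2(0,T;\gL2)$; reading \eqref{lem1} then gives $\partial_t v_\lambda$ bounded in $L^2(0,T;\H1{}')$. This estimate, where all three special hypotheses \eqref{bilip}, \eqref{ip_beta}, \eqref{ip_g} and the $L^1$-integrability of $\hat\beta(u_0)$ are used simultaneously, is the main obstacle; the handling of the interior term in the general (multivalued) case is the point I would verify most carefully.

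\textbf{Passage to the limit.} Extracting subsequences, I would take $u_\lambda\rightharpoonup u$ and $v_\lambda\rightharpoonup v$ in $L^2(0,T;\H1)$, $\partial_t v_\lambda\rightharpoonup\partial_t v$ in $L^2(0,T;\H1{}')$, and $\beta_\lambda(u_\lambda)\rightharpoonup\xi$ in $L^2(0,T;\gL2)$. An Aubin--Lions--Simon argument ($\H1\hookrightarrow\hookrightarrow\L2\hookrightarrow\H1{}'$) gives $v_\lambda\to v$ strongly in $L^2(0,T;\L2)$ and in $C([0,T];\H1{}')$; since $\gamma^{-1}$ is Lipschitz, $u_\lambda=\gamma^{-1}(v_\lambda)\to\gamma^{-1}(v)$ strongly in $L^2(0,T;\L2)$, whence $v=\gamma(u)$ a.e., i.e. $v\in\mathscr{A}(u)$, and the strong convergence in $C([0,T];\H1{}')$ yields $v(0)=v_0$. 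Interpolating the strong $L^2(0,T;\L2)$ convergence against the $L^2(0,T;\H1)$ bound gives $u_\lambda\to u$ in $L^2(0,T;H^s(\Omega))$ for some $s\in(\tfrac12,1)$, so the traces converge strongly, $u_\lambda|_{\Gamma_1}\to u|_{\Gamma_1}$ in $L^2(0,T;\gL2)$. Passing to the limit in \eqref{B_def}--\eqref{lem1} (the $\lambda$-term vanishes since $\lambda u_\lambda\to0$) produces \eqref{thm1} with this $\xi$. Finally, since $J_\lambda u_\lambda=u_\lambda-\lambda\beta_\lambda(u_\lambda)\to u$ strongly on $\Gamma_1$, $\beta_\lambda(u_\lambda)\in\beta(J_\lambda u_\lambda)$, and $\beta_\lambda(u_\lambda)\rightharpoonup\xi$, the strong--weak closure of the maximal monotone operator induced by $\beta$ on $L^2(\Gamma_1\times(0,T))$ gives $\xi\in\beta(u)$ a.e., establishing \eqref{thm2}--\eqref{thm3} and completing the proof.
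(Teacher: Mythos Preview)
Your overall architecture (regularize via Theorem~\ref{lemma}, test by $u_\lambda$ and by $\beta_\lambda(u_\lambda)$, pass to the limit with Aubin--Lions and interpolation) matches the paper's, and your handling of the second estimate and of the limit is essentially the same. But there is a genuine gap at the very first step of the estimates: the chain-rule identities
\[
\langle\partial_t v_\lambda,u_\lambda\rangle=\frac{d}{dt}\int_\Omega j(u_\lambda)\,dx,
\qquad
\langle\partial_t v_\lambda,\beta_\lambda(u_\lambda)\rangle=\frac{d}{dt}\int_\Omega \Theta(u_\lambda)\,dx
\]
are only formal at the regularity supplied by Theorem~\ref{lemma}, which gives merely $\partial_t v_\lambda\in L^2(0,T;\H1{}')$. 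Br\'ezis's lemma (the tool behind these identities) requires $v_\lambda\in H^1(0,T;\L2)$, and for the second identity one really needs $u_\lambda\in H^1(0,T;\L2)$ to differentiate $t\mapsto\int_\Omega\Theta(u_\lambda(t))$; neither is available a priori. Without this, your a priori bounds are not rigorously derived.

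The paper closes this gap by an additional layer of approximation that you omit: it replaces $(u_0,h)$ by smoother data $(u_{0,\lambda},h_\lambda)$ with $u_{0,\lambda}\in\H1$ and $h_\lambda\in H^1(0,T;\gL2)$, and proves (Lemma~4.1) that for such data the approximate solutions enjoy $u_\lambda,v_\lambda\in H^1(0,T;\L2)\cap L^\infty(0,T;\H1)$, which makes both chain rules rigorous. A second point you miss is that this regularization of $u_0$ must be compatible with the $\beta$-estimate: one needs $\int_\Omega\hat\beta_\lambda(u_{0,\lambda})\le L$ uniformly in $\lambda$, and the paper constructs $u_{0,\lambda}$ as the solution of an elliptic problem $u_{0,\lambda}-\lambda\Delta u_{0,\lambda}=u_0$ with homogeneous Neumann condition precisely to guarantee this (testing by $\beta_\lambda(u_{0,\lambda})$ gives $\int_\Omega\hat\beta_\lambda(u_{0,\lambda})\le\int_\Omega\hat\beta(u_0)$). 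Once these approximations are in place and shown to converge, the estimates you wrote are indeed uniform in both $\lambda$ and the approximation, and your passage to the limit goes through.
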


\begin{rmk}
  We can notice that hypothesis \eqref{ip_beta} is not too restrictive on $\beta$ itself: as a matter of fact,
  $\beta$ is allowed to have polynomial growth or even a first order exponential growth.
\end{rmk}
\begin{rmk}
  \label{rem}
  Thanks to the assumption \eqref{bilip} on $\gamma$, the first inclusion in \eqref{thm2} yields  $v(t)=\gamma(u(t))$ for a.e. $t\in(0,T)$ (cf.~\eqref{Abar}--\eqref{A}), so that
  it is possible to deduce a higher regularity of $v$, that is
  \beq
    \label{reg1_v}
    v\in L^2(0,T;\H1)\,.
  \eeq
  Furthermore, since $v\in H^1(0,T;H^1(\Omega)')\cap L^2(0,T;\H1)$, we also have that
  \beq
    \label{reg2_v}
    v\in C^0([0,T];\L2)\,.
  \eeq
\end{rmk}

\begin{thm}
  \label{dep-uniq}
  Under the hypotheses of the previous theorem, let us also suppose that $\gamma$ is linear, i.e.
  there exists $\alpha>0$ such that
  \beq
    \label{gamma_lin}
      \gamma(r)=\alpha r\,, \quad r\in\Ar\,.
  \eeq
  Then, there exists a constant $C>0$, which depends only on $\alpha$, $\Omega$ and $T$, such that
  for every solution $(u_i,v_i,\xi_i)$ of the problem \eqref{thm1}--\eqref{thm3} corresponding to the data $\{u_0^i\,, h_i\,, g_i\}$, for $i=1,2$,
  the following continuous dependence property holds:
  \beq
    \label{dep_cont}
    \begin{split}
    &\l|u_1-u_2\r|^2_{L^2(0,T;\H1)\cap L^\infty(0,T;\L2)}\\
    &\leq C\left[\nL2{u_1^0-u_2^0}^2+
     \l|g_1-g_2\r|_{L^2(0,T;\L2)}^2+\l|h_1-h_2|\r|_{L^2(0,T;L^2(\Gamma_1))}^2\right]
     \end{split}
  \eeq
  In particular, problem \eqref{thm1}--\eqref{thm3} has a unique solution.
\end{thm}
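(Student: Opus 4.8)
The plan is to exploit the linearity $\gamma(r)=\alpha r$, which forces $v_i=\alpha u_i$, and to run a standard energy estimate on the difference of the two solutions followed by Gronwall's lemma. Set $u:=u_1-u_2$, $v:=v_1-v_2=\alpha u$, $\xi:=\xi_1-\xi_2$, $g:=g_1-g_2$ and $h:=h_1-h_2$. Writing the variational identity \eqref{thm1} for each solution and subtracting, I obtain for a.e.\ $t$ and every $z\in\H1$
\[
\left<\frac{\partial v}{\partial t}(t),z\right>+\int_\Omega\nabla u(t)\cdot\nabla z\,dx+\int_{\Gamma_1}\xi(t)z\,ds=\int_\Omega g(t)z\,dx+\int_{\Gamma_1}h(t)z\,ds.
\]

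The central step is to test this identity with $z=u(t)$. For the parabolic term I would use $v=\alpha u$ together with the regularity $v\in H^1(0,T;\H1')\cap L^2(0,T;\H1)\hookrightarrow C^0([0,T];\L2)$ recorded in Remark~\ref{rem}: the integration-by-parts formula in the Gelfand triple $\H1\hookrightarrow\L2\hookrightarrow\H1'$ gives $\langle\partial_t v(t),u(t)\rangle=\tfrac{1}{2\alpha}\tfrac{d}{dt}\nL2{v(t)}^2=\tfrac{\alpha}{2}\tfrac{d}{dt}\nL2{u(t)}^2$. For the boundary term I would simply discard it by monotonicity: since $\xi_i(t)\in\beta(u_i(t))$ a.e.\ on $\Gamma_1$, monotonicity of $\beta$ yields $\int_{\Gamma_1}\xi(t)u(t)\,ds\geq0$, so this term only helps the estimate. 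The resulting inequality is
\[
\frac{\alpha}{2}\frac{d}{dt}\nL2{u(t)}^2+\nL2{\nabla u(t)}^2\leq\int_\Omega g(t)u(t)\,dx+\int_{\Gamma_1}h(t)u(t)\,ds.
\]

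I would then bound the volume term by Young's inequality, $\int_\Omega g u\leq\tfrac12\nL2{g}^2+\tfrac12\nL2{u}^2$. For the boundary term the only genuinely analytic ingredient is a trace inequality of the form $\ngL2{u}^2\leq\epsilon\nL2{\nabla u}^2+C_\epsilon\nL2{u}^2$, valid on a smooth bounded $\Omega$; combining it with Young's inequality and choosing $\epsilon$ small lets me absorb $\nL2{\nabla u}^2$ into the left-hand side at the cost of terms $C\nL2{u}^2+C\ngL2{h}^2$. Integrating in time from $0$ to $t$, using $u(0)=u_1^0-u_2^0$ (which follows from $v_i(0)=\alpha u_0^i$ via \eqref{thm3} and \eqref{init_pair2}), and applying Gronwall's lemma to $t\mapsto\nL2{u(t)}^2$ yields both the $L^\infty(0,T;\L2)$ bound and, via the retained gradient term, the $L^2(0,T;\H1)$ bound; adding the two gives the asserted estimate \eqref{dep_cont}, with a constant depending only on $\alpha$, $\Omega$ (through the trace constant) and $T$ (through Gronwall).

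Uniqueness is then immediate: taking $u_0^1=u_0^2$, $g_1=g_2$ and $h_1=h_2$ makes the right-hand side of \eqref{dep_cont} vanish, whence $u_1=u_2$ and $v_1=\alpha u_1=\alpha u_2=v_2$; feeding this back into the subtracted identity leaves $\int_{\Gamma_1}(\xi_1-\xi_2)z\,ds=0$ for all $z\in\H1$, so $\xi_1=\xi_2$ a.e.\ on $\Gamma_1\times(0,T)$ by density of the traces of $\H1$ in $\gL2$. I do not expect a serious obstacle: the linearity of $\gamma$ is precisely what converts the duality pairing into an exact time derivative and neutralises the doubly nonlinear structure, so the only points requiring care are the justification of that integration by parts and the trace estimate, both of which are standard.
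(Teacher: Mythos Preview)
Your proposal is correct and follows essentially the same energy-estimate-plus-Gronwall route as the paper: subtract the two variational identities, test with $u_1-u_2$, use $\gamma(r)=\alpha r$ to turn the duality pairing into an exact time derivative, drop the $\xi$-term by monotonicity of $\beta$, and close with Gronwall. The only cosmetic difference is in handling the boundary term: the paper uses the plain trace bound $\ngL2{z}\leq C\nH1{z}$ and then absorbs half of the full $\H1$-norm, whereas you invoke the interpolation-type trace inequality $\ngL2{u}^2\leq\epsilon\nL2{\nabla u}^2+C_\epsilon\nL2{u}^2$ to absorb only the gradient part; both lead to the same conclusion.
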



\section{The proof of the first result}
\setcounter{equation}{0}

The idea is to apply Theorem \ref{ben_show}, for $\lambda>0$ fixed, in the spaces $V=H^1(\Omega)$, $W=L^2(\Omega)$, to the operators
$f$, $\mathscr{A}$ and $\mathscr{B}_\lambda$ defined in \eqref{f}, \eqref{A} and \eqref{B_def} respectively.
Note at first that in this case $W$ is a reflexive Banach space and $V$ is a Hilbert space which is dense and compactly embedded in $W$. So,
if we are able to check hypotheses $[A1]$, $[A2]$, $[B1]$ and $[B2]$, then we can apply Theorem \ref{ben_show},
and Lemma \ref{lemma} follows directly.
The objective of this section is therefore to control the hypotheses of the theorem for $f$, $\mathscr{A}$ and $\mathscr{B}_\lambda$.

\subsection{Checking [A1]}

First of all, $\gamma$ is maximal monotone on $\Ar$, so there exists $\hat{\gamma}:\Ar\rarr(-\infty,+\infty]$ convex, proper and lower semicontinuous
such that $\partial\hat{\gamma}=\gamma$: it is not restrictive to suppose that $\hat{\gamma}(0)=0$ (by adding an appropriate constant). Furthermore,
the condition $\gamma(0)\ni0$ implies that $0$ is a minimizer for $\hat{\gamma}$, so we have $\hat{\gamma}:\Ar\rarr[0,+\infty]$.

Let us consider $\phi:L^2(\Omega)\rarr[0,+\infty]$ given by
\beq
  \label{phi}
  \phi(u)=
  \begin{cases}
    \int_\Omega{\hat{\gamma}(u(x))\,dx} \quad &\text{if $\hat{\gamma}(u)\in L^1(\Omega)$}\\
    +\infty \quad &\text{otherwise.}
  \end{cases}
\eeq
\noindent We know from the general theory that $\phi$ is proper, convex and lower semicontinuous on $L^2(\Omega)$, and that $i^*\circ\partial\phi\circ i=\mathscr{A}$,
where $\mathscr{A}$ is defined in \eqref{A}. 

We have to check that $\phi$ is continuous at some point of $H^1(\Omega)$.
Let's first prove that $D(\partial\phi)=L^2(\Omega)$: let $u\in L^2(\Omega)$ and $v:\Omega\rarr\Ar$ be a measurable function such that
$v(x)\in\gamma(u(x))$ almost everywhere in $\Omega$. Then, condition \eqref{lin_bound} implies that
\[
  \left|v(x)\right|\leq C_1\left| u(x)\right|+C_2 \quad \text{for a.e. } x\in\Omega\,;
\]
hence, it turns out that $v\in L^2(\Omega)$, and $u\in D(\partial\phi)$. As a consequence, $D(\phi)=L^2(\Omega)$, since in general $D(\partial\phi)\subseteq D(\phi)$;
furthermore, since each convex, proper and lower semicontinuous function on a Banach space is continuous at the interior of its domain 
(for details see \cite[Prop.~2.12, p.~40]{brezis}),
we conclude that $\phi$ is continuous everywhere in $L^2(\Omega)$. In particular, we have $\phi:L^2(\Omega)\rarr[0,+\infty)$.

Finally, we have to check that $\partial\phi\circ i:H^1(\Omega)\rarr\ L^2(\Omega)$ is bounded. Let $u\in H^1(\Omega)$ and $v\in(\partial\phi\circ i)(u)$:
then, $v(x)\in\gamma(u(x))$ for almost every $x\in\Omega$ and condition \eqref{lin_bound} implies that
\[
  \left|v(x)\right|\leq C_1\left|u(x)\right|+C_2 \quad \text{for a.e. } x\in\Omega\,,
\]
from which, passing to the norms, we deduce the required boundedness.

\subsection{Checking [A2]}

We only have to control that the operator $\partial\phi\circ i: L^2(0,T;H^1(\Omega))\rarr L^2(0,T;L^2(\Omega))$ is bounded
(more precisely, we are considering the operator induced by $\partial\phi\circ i$ on the time-dependent spaces by the a.e. relation).
Let $u\in L^2(0,T;H^1(\Omega))$ and $v\in(\partial\phi\circ i)(u)$: then, $u(t)\in H^1(\Omega)$
and $v(t)\in\partial\phi(u(t))$ almost everywhere in $(0,T)$, so, as we have observed above, we have
\[
  \left|v(t)(x)\right|\leq C_1\left|u(t)(x)\right|+C_2 \quad \text{for a.e. } (x,t)\in\Omega\times(0,T)\,,
\]
which easily implies the required boundedness.

\subsection{Checking [B1]}

We now have to control that $\mathscr{B}_\lambda:\H1\rarr\H1'$ is maximal monotone and bounded: let us start from the last property.
Recall that for all $\zeta\in\H1$, the trace of $\zeta$ on $\Gamma_1$ is an element of $\gL2$ and 
\beq
\label{stimh1}
  \ngL2\zeta\leq C\nH1\zeta \qquad\text{for some constant C>0 (independent of $\zeta$)\,;}
\eeq
furthermore, $\beta_\lambda$ is $\frac{1}{\lambda}$-Lipschitz continuous on $\gL2$ and $\beta_\lambda(0)=0$, then the following holds:
\[
  \ngL2{\beta_\lambda(\zeta)}\leq\frac{1}{\lambda}\ngL2\zeta\,.
\]
Hence, if $u\in H^1(\Omega)$, then for all $z\in H^1(\Omega)$, taking \eqref{B_def} an \eqref{stimh1} into account, we have
\[
  \begin{split}
    \left|\left<\mathscr{B}_\lambda(u),z\right>\right|&\leq \lambda\nL2u\nL2z+\nL2{\nabla u}\nL2{\nabla z}+\ngL2{\beta_\lambda(u)}\ngL2z\\
    &\leq\left[\max\{\lambda,1\}+\frac{C^2}{\lambda}\right]\nH1u\nH1z \qquad \forall z\in\H1\,,
  \end{split}
\]
from which we obtain
\beq
\label{bound_Blam}
  \l|\mathscr{B}_\lambda(u)\r|_{H^1(\Omega)'}\leq\left[\max\{\lambda,1\}+\frac{C^2}{\lambda}\right]\nH1u\ \quad \forall u\in H^1(\Omega)\,,
\eeq
that is our required boundedness on $\mathscr{B}_\lambda$.

We now have to control that $\mathscr{B}_\lambda$ is maximal monotone: in this sense, we check it using a characterization 
of the maximal monotonicity, i.e. we show that $Rg(\mathscr{R}+\mathscr{B}_\lambda)=\H1'$, where $\mathscr{R}:\H1\rarr\H1'$ is the usual Riesz operator.
For any given $F\in H^1(\Omega)'$, we have to find $u\in \H1$ (which will depend a posteriori on $\lambda$, of course) such that
\[
  \int_\Omega{uz\,dx}+\int_\Omega{\nabla u\cdot\nabla z\,dx}+\left<\mathscr{B}_\lambda(u),z\right>=\left<F,z\right> \quad \forall z\in\H1\,,
\]
or in other words that
\beq
\label{maxmon}
  \left(1+\lambda\right)\int_\Omega{uz\,dx}+2\int_\Omega{\nabla u\cdot\nabla z\,dx}+\int_{\Gamma_1}{\beta_\lambda(u)z\,ds}=\left<F,z\right> \quad \forall z\in\H1\,.
\eeq
First of all, we introduce $\beta_\lambda^\epsilon$ as
\beq
  \label{tronc}
  \beta_\lambda^\epsilon(r)=
  \begin{cases}
    \beta_\lambda(r) \quad &\text{if $\quad\left|\beta_\lambda(r)\right|\leq\frac{1}{\epsilon}$}\\
    \frac{1}{\epsilon} &\text{if $\quad\beta_\lambda(r)>\frac{1}{\epsilon}$}\\
    -\frac{1}{\epsilon} &\text{if $\quad\beta_\lambda(r)<-\frac{1}{\epsilon}$}
  \end{cases}
\eeq
and for a fixed $\epsilon>0$, we look for $u_\epsilon\in\H1$ such that
\beq
  \label{maxmon_eps1}
  \left(1+\lambda\right)\int_\Omega{u_\epsilon z\,dx}+2\int_\Omega{\nabla u_\epsilon\cdot\nabla z\,dx}
  +\int_{\Gamma_1}{\beta_\lambda^\epsilon(u_\epsilon)z\,ds}=\left<F,z\right> \quad \forall z\in\H1\,.
\eeq
The idea is to use a fixed point argument in the following sense: let $\delta\in(0,1/2)$ and $\bar{u}\in H^{1-\delta}(\Omega)$. We now solve
for a fixed $\bar{u}$ the following variational equation:
\beq
  \label{maxmon_eps}
  \left(1+\lambda\right)\int_\Omega{u_\epsilon z\,dx}+2\int_\Omega{\nabla u_\epsilon\cdot\nabla z\,dx}=
  -\int_{\Gamma_1}{\beta_\lambda^\epsilon(\bar{u})z\,ds}+\left<F,z\right> \quad \forall z\in\H1\,.
\eeq
Please note that for such a choice of $\bar{u}$, the trace of $\bar{u}$ on $\Gamma_1$ is in $\gL2$, and everything is thus well defined.
We would like to apply the Lax - Milgram lemma. First of all, note that since the trace of $\bar{u}$ is in $\gL2$ and $\beta_\lambda^\epsilon$
is $\frac{1}{\lambda}$-Lipschitz continuous (because so is $\beta_\lambda$ and thanks to \eqref{tronc}), then also $\beta_\lambda^\epsilon(\bar{u})\in\gL2$,
and thus
\[
  z\mapsto -\int_{\Gamma_1}{\beta_\lambda^\epsilon(\bar{u})z\,ds}+\left<F,z\right>\,,	\quad z\in\H1
\]
is an element of $\H1'$. Furthermore, it is clear that 
\[
  \left(z_1,z_2\right)\mapsto \left(1+\lambda\right)\int_\Omega{z_1z_2\,dx}+2\int_\Omega{\nabla z_1\cdot\nabla z_2\,dx}\,,
  \quad \left(z_1,z_2\right)\in\H1\times\H1
\]
is a bilinear continuous and coercive form on $\H1$. Thus, the Lax - Milgram lemma implies that there exists a unique $u_\epsilon\in\H1$ solving \eqref{maxmon_eps}.
At this point, note that if we use the specific test function $z=u_\epsilon$ in \eqref{maxmon_eps}, owing to the Hölder inequality
we obtain
\[
  \begin{split}
    \left(1+\lambda\right)&\nL2{u_\epsilon}^2+2\nL2{\nabla u_\epsilon}^2\leq
    \int_{\Gamma_1}{\left|\beta_\lambda^\epsilon(\bar{u})\right|\left|u_\epsilon\right|\,ds}+\left|\left<F,u_\epsilon\right>\right|\\
    &\leq \ngL2{\beta_\lambda^\epsilon(\bar{u})}\ngL2{u_\epsilon}+\l|F\r|_{H^1(\Omega)'}\nH1{u_\epsilon}\\
    &\leq\frac{1}{\epsilon}\left|\Gamma_1\right|\ngL2{u_\epsilon}+\l|F\r|_{H^1(\Omega)'}\nH1{u_\epsilon}\leq
    \left[\frac{C}{\epsilon}\left|\Gamma_1\right|+\l|F\r|_{H^1(\Omega)'}\right]\nH1{u_\epsilon}\\
    &\leq\frac{1}{2}\left(\frac{C}{\epsilon}\left|\Gamma_1\right|+\l|F\r|_{H^1(\Omega)'}\right)^2+\frac{1}{2}\nH1{u_\epsilon}^2\,,
  \end{split}
\]
from which we deduce the estimate 
\[
  \left(\frac{1}{2}+\lambda\right)\nL2{u_\epsilon}^2+\frac{3}{2}\nL2{\nabla u_\epsilon}^2\leq
  \frac{1}{2}\left(\frac{C}{\epsilon}\left|\Gamma_1\right|+\l|F\r|_{H^1(\Omega)'}\right)^2\,.
\]
At this point, it is suitable to introduce the convex set
\[
  K_\epsilon:=\left\{z\in\H1: \left(\frac{1}{2}+\lambda\right)\nL2z^2+\frac{3}{2}\nL2{\nabla z}^2\leq
  \frac{1}{2}\left(\frac{C}{\epsilon}\left|\Gamma_1\right|+\l|F\r|_{H^1(\Omega)'}\right)^2 \right\}
\]
and the mapping
\beq
  \label{psi_eps}
  \Psi_\epsilon:K_\epsilon\rarr K_\epsilon\,, \quad \Psi_\epsilon(\bar{u})=u_\epsilon\,, \for \bar{u}\in K_\epsilon\,;
\eeq
so, $\Psi_\epsilon(\bar{u})$ is the unique solution $u_\epsilon$ of problem \eqref{maxmon_eps}, corresponding to $\bar{u}\in K_\epsilon$.
Furthermore, $u_\epsilon$~is a solution of problem \eqref{maxmon_eps1} if and only if $\Phi_\epsilon(u_\epsilon)=u_\epsilon$.

Hence, in order to solve \eqref{maxmon_eps}, we have to find a fixed point of $\Psi_\epsilon$: the idea is to use the Schauder fixed point theorem,
which we briefly recall.
\begin{thm}[Schauder]
  \label{schau}
  Let $X$ be a Banach space, $C$ a compact convex subset of $X$ and $f:C\rarr C$ a continuous function:
  then, there exists $x_0\in C$ such that $f(x_0)=x_0$.
\end{thm}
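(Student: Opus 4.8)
The plan is to reduce the statement to the finite-dimensional Brouwer fixed point theorem, which I would take as the essential known ingredient. The obstruction to applying Brouwer directly is that $C$ lives in the possibly infinite-dimensional space $X$; the key device is to approximate $f$ by maps whose ranges lie in finite-dimensional convex sets, solve the fixed point problem there, and then pass to the limit using the compactness of $C$.

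First I would fix $\epsilon>0$ and exploit the compactness of $C$ to choose finitely many points $y_1,\dots,y_N\in C$ whose $\epsilon$-balls cover $C$. On this net I would build the Schauder projection
\[
  P_\epsilon(x)=\frac{\sum_{i=1}^N\mu_i(x)\,y_i}{\sum_{i=1}^N\mu_i(x)}\,,\qquad \mu_i(x)=\max\{0,\,\epsilon-\|x-y_i\|\}\,,
\]
which is well defined and continuous on $C$ because the covering property forces the denominator to be strictly positive. The two features I would check are that $P_\epsilon$ takes values in the compact convex set $C_\epsilon:=\mathrm{conv}\{y_1,\dots,y_N\}\subseteq C$, and that $\|P_\epsilon(x)-x\|\le\epsilon$ for every $x\in C$ (since $\mu_i(x)>0$ only when $\|x-y_i\|<\epsilon$, so $P_\epsilon(x)$ is a convex combination of points lying within $\epsilon$ of $x$).

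Next I would consider the composition $P_\epsilon\circ f:C_\epsilon\to C_\epsilon$. This is continuous and maps the finite-dimensional compact convex set $C_\epsilon$ into itself, so Brouwer's theorem applied inside $\mathrm{span}\{y_1,\dots,y_N\}$ yields a point $x_\epsilon\in C_\epsilon$ with $P_\epsilon(f(x_\epsilon))=x_\epsilon$. The projection estimate then gives $\|x_\epsilon-f(x_\epsilon)\|=\|P_\epsilon(f(x_\epsilon))-f(x_\epsilon)\|\le\epsilon$, i.e.\ an approximate fixed point. Taking $\epsilon=1/n$ produces a sequence $(x_n)\subseteq C$ with $\|x_n-f(x_n)\|\to0$; by compactness of $C$ a subsequence converges to some $x_0\in C$, and continuity of $f$ lets me pass to the limit in the relation $\|x_n-f(x_n)\|\to 0$ to conclude $f(x_0)=x_0$.

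The main obstacle is not any single estimate but the design of the approximation scheme: everything hinges on producing the finite-dimensional reduction correctly, so that Brouwer can be invoked, while keeping the approximation error uniformly controlled. In particular the verification that $P_\epsilon$ is continuous with the desired range together with the uniform bound $\|P_\epsilon(x)-x\|\le\epsilon$ is the heart of the argument; once these are in place, Brouwer's theorem and the compactness of $C$ make the limit passage routine. (Since the statement is quoted here only as a tool from the literature, one may of course alternatively just cite it.)
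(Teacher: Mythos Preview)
The paper does not actually prove this statement: the Schauder theorem is only \emph{recalled} as a classical tool and then applied to the map $\Psi_\epsilon$, with no argument given. Your proposal, by contrast, supplies the standard proof via the Schauder projection and reduction to Brouwer's theorem, and every step is correct: the projection $P_\epsilon$ is well defined and continuous on $C$, takes values in the finite-dimensional simplex $C_\epsilon\subseteq C$, and satisfies the uniform estimate $\|P_\epsilon(x)-x\|\le\epsilon$; Brouwer then yields approximate fixed points, and compactness of $C$ together with continuity of $f$ closes the limit. So there is nothing to compare at the level of proof strategy---your argument simply fills in what the paper leaves to the literature, and (as you yourself observe) a citation would equally serve the paper's needs.
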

We want to apply Theorem \ref{schau} to $\Psi_\epsilon$: first of all, we have to choose the Banach space~$X$,
in the notation of the result. In order to obtain the compactness property of~$K$, the idea is to work in $X=H^{1-\delta}(\Omega)$.
In fact, it is clear that $K_\epsilon$ is bounded in $\H1$, and since the inclusion $\H1\hookrightarrow H^{1-\delta}(\Omega)$ is compact,
we have that $K_\epsilon$ is a compact set of $H^{1-\delta}(\Omega)$. We now have to check that $\Psi_\epsilon$ is continuous with respect
to the topology of $H^{1-\delta}(\Omega)$: so, let $\{\bar{u}_n\}_n\subseteq K_\epsilon$, $\bar{u}\in K_\epsilon$, and let us show that
\beq
\label{cont}
  \bar{u}_n\rarr\bar{u} \text{ in $H^{1-\delta}(\Omega)$} \quad\impl\quad \Psi_\epsilon(\bar{u}_n)\rarr\Psi_\epsilon(\bar{u}) \text{ in $H^{1-\delta}(\Omega)$}\,.
\eeq
If we call $u_{\epsilon,n}=\Psi_\epsilon(\bar{u}_n)$ and $u_\epsilon=\Psi_\epsilon(\bar{u})$, then the definition of $\Psi_\epsilon$ itself 
and the difference of the corresponding equations allow us to infer that
\[
  \left(1+\lambda\right)\int_\Omega{(u_{\epsilon,n}-u_\epsilon) z\,dx}+2\int_\Omega{\nabla (u_{\epsilon,n}-u_\epsilon)\cdot\nabla z\,dx}=
  -\int_{\Gamma_1}{\left(\beta_\lambda^\epsilon(\bar{u}_n)-\beta_\lambda^\epsilon(\bar{u})\right)z\,ds}
\]
for all $z\in\H1$;
testing now by $z=u_{\epsilon,n}-u_\epsilon$, using Hölder inequality and \eqref{stimh1}, we arrive at
\[
  \begin{split}
     \left(1+\lambda\right)&\nL2{u_{\epsilon,n}-u_\epsilon}^2+2\nL2{\nabla (u_{\epsilon,n}-u_\epsilon)}^2\\
     &\leq\ngL2{\beta_\lambda^\epsilon(\bar{u}_n)-\beta_\lambda^\epsilon(\bar{u})}\ngL2{u_{\epsilon,n}-u_\epsilon}\\
     &\leq\frac{1}{\lambda}\ngL2{\bar{u}_n-\bar{u}}\ngL2{u_{\epsilon,n}-u_\epsilon}\leq
     \frac{C}{\lambda}\ngL2{\bar{u}_n-\bar{u}}\nH1{u_{\epsilon,n}-u_\epsilon}\\
     &\leq \frac{C^2}{2\lambda^2}\ngL2{\bar{u}_n-\bar{u}}^2+\frac{1}{2}\nH1{u_{\epsilon,n}-u_\epsilon}^2\,.
   \end{split}
\]
Hence, we have obtained that
\[
  \left(\frac{1}{2}+\lambda\right)\nL2{u_{\epsilon,n}-u_\epsilon}^2+\frac{3}{2}\nL2{\nabla (u_{\epsilon,n}-u_\epsilon)}^2
  \leq \frac{C^2}{2\lambda^2}\ngL2{\bar{u}_n-\bar{u}}^2\,;
\]
now, since if $\bar{u}_n\rarr\bar{u}$ in $H^{1-\delta}(\Omega)$ then in particular $\bar{u}_n\rarr\bar{u}$ in $\gL2$
for the traces, the relation above implies \eqref{cont}, and the continuity of $\Psi_\epsilon$ is proven.
So, we are able to apply Theorem \ref{schau} to $\Psi_\epsilon$: we find out that there exists $u_\epsilon\in K_\epsilon$
such that $\Psi_\epsilon(u_\epsilon)=u_\epsilon$, i.e. that there exists a solution $u_\epsilon$ of problem \eqref{maxmon_eps1}.

At this point, we would like to find a solution of problem \eqref{maxmon} taking the limit as $\epsilon\rarr0^+$: in order to do this, we need some estimates
on $u_\epsilon$ independent of $\epsilon$. It is immediate to check that if we test equation \eqref{maxmon_eps1} by $z=u_\epsilon$
(actually, this is an admissible choice of $z$), we obtain
\[
  \left(1+\lambda\right)\int_\Omega{u_\epsilon^2\,dx}+2\int_\Omega{\left|\nabla u_\epsilon\right|^2\,dx}
  +\int_{\Gamma_1}{\beta_\lambda^\epsilon(u_\epsilon)u_\epsilon\,ds}=\left<F,u_\epsilon\right>\,;
\]
since $\beta_\lambda^\epsilon$ is monotone and $0\in\beta(0)$ we deduce that
\[
  \left(1+\lambda\right)\nL2{u_\epsilon}^2+2\nL2{\nabla u_\epsilon}^2\leq\l|F\r|_{H^1(\Omega)'}\nH1{u_\epsilon}\leq\frac{1}{2}\l|F\r|_{H^1(\Omega)'}^2+\frac{1}{2}\nH1{u_\epsilon}^2\,,
\]
from which
\beq
  \label{stim_ueps}
  \left(\frac{1}{2}+\lambda\right)\nL2{u_\epsilon}^2+\frac{3}{2}\nL2{\nabla u_\epsilon}^2\leq\l|F\r|_{H^1(\Omega)'}^2 \quad \forall\epsilon>0\,.
\eeq
Hence, $\{u_\epsilon\}_{\epsilon>0}$ is bounded in $\H1$, and therefore there exists a sequence $\epsilon_n\searrow0$ and $u\in\H1$ such that
\[
  u_{\epsilon_n}\rarrw u \quad\text{in}\quad \H1\,;
\] 
in particular, this condition implies that as $n\rarr\infty$
\[
  u_{\epsilon_n}\rarr u \quad\text{in}\quad H^{1-\delta}(\Omega)\,,
\]
\[
  u_{\epsilon_n}\rarr u \quad\text{in}\quad \gL2\,.
\]
We now want to take the limit in equation \eqref{maxmon_eps1} evaluated for $u_{\epsilon_n}$.
Thanks to the weak convergence of $u_{\epsilon_n}$, we have that
\[
  \int_\Omega{u_{\epsilon_n} z\,dx}\rarr\int_\Omega{u z\,dx} \aand 
  \int_\Omega{\nabla u_{\epsilon_n}\cdot\nabla z\,dx}\rarr\int_\Omega{\nabla u\cdot\nabla z\,dx}\,;
\]
furthermore, the Lipshitz-continuity of $\beta_\lambda^\epsilon$ leads to
\[
  \begin{split}
    \ngL2{\beta_\lambda^\epsilon(u_{\epsilon_n})-\beta_\lambda(u)}&\leq
    \ngL2{\beta_\lambda^\epsilon(u_{\epsilon_n})-\beta_\lambda^\epsilon(u)}+\ngL2{\beta_\lambda^\epsilon(u)-\beta_\lambda(u)}\\
    &\leq\frac{1}{\lambda}\ngL2{u_{\epsilon_n}-u}+\ngL2{\beta_\lambda^\epsilon(u)-\beta_\lambda(u)}\rarr0
  \end{split}
\]
since $u_{\epsilon_n}\rarr u$ in $\gL2$ and thanks to the dominated convergence theorem.
Hence, taking the limit as $n\rarr\infty$ we find exactly that $u$ satisfies equation \eqref{maxmon}: this ends the proof 
of the maximal monotonicity of $\mathscr{B}_\lambda$.

\begin{rmk}
  In order to apply Theorem \ref{ben_show} we need $\mathscr{B}_\lambda$ to be maximal monotone.
  Actually, we can say something more: $\mathscr{B}_\lambda$ is a subdifferential, or, more precisely,
  there exists $\psi_\lambda:\H1\rarr(-\infty,+\infty]$ proper, convex and lower semicontinuous such that
  $\partial\psi_\lambda=\mathscr{B}_\lambda$. In particular, $\psi_\lambda$ has the following expression:
  \[
    \psi_\lambda(z)=
    \frac{\lambda}{2}\int_\Omega{z^2\,dx}+\frac{1}{2}\int_\Omega{\left|\nabla z\right|^2\,dx}+\int_{\Gamma_1}{\hat{\beta}_\lambda(z)\,ds}\,,
  \]
  where $\hat{\beta}_\lambda$ is the proper, convex and continuous function on $\Ar$ such that $\hat{\beta}_\lambda(0)=0$ and
  $\partial\hat{\beta}_\lambda=\beta_\lambda$.
\end{rmk}
  
\subsection{Checking [B2]}

We now have to control that the operator $\mathscr{B}_\lambda:L^2(0,T;\H1)\rarr L^2(0,T;\H1')$ is bounded and coercive.
Let $u\in L^2(0,T;\H1)$ and $v\in L^2(0,T;\H1)$: then, the estimate \eqref{bound_Blam} implies that
\[
  \left|\left<\mathscr{B}_\lambda(u(t)),v(t)\right>\right|\leq\left[\max\{\lambda,1\}+\frac{C^2}{\lambda}\right]\nH1{u(t)}\nH1{v(t)} \for\text{a.e. } t\in(0,T)\,;
\]
integrating the previous expression on $(0,T)$ we obtain
\[
  \int_{(0,T)}{\left|\left<\mathscr{B}_\lambda(u(t)),v(t)\right>\right|\,dt}\leq\left[\max\{\lambda,1\}+\frac{C^2}{\lambda}\right]
  \l|u\r|_{L^2(0,T;\H1)}\l|v\r|_{L^2(0,T;\H1')}\,.
\]
Since this is true for all $v\in L^2(0,T;\H1)$, we have proved the required boundedness on $\mathscr{B}_\lambda$.

We now focus on the coercivity of $\mathscr{B}_\lambda$: for each $u\in L^2(0,T;\H1)$, using the monotonicity of $\beta_\lambda$ we have
\[
  \begin{split}
    \left<\mathscr{B}_\lambda(u(t)),u(t)\right>&=\lambda\int_\Omega{u(t)^2\,dx}+\int_\Omega{\left|\nabla u(t)\right|^2\,dx}
    +\int_{\Gamma_1}{\beta_\lambda(u(t))u(t)\,ds}\geq\\
    &\geq\min\{\lambda,1\}\nH1{u(t)}^2 \for \text{a.e. } t\in(0,T)\,;
  \end{split}
\]
so, integrating we deduce that
\[
  \int_0^T{\left<\mathscr{B}_\lambda(u(t)),u(t)\right>\,dt}\geq\min\{\lambda,1\}\l|u\r|_{L^2(0,T;\H1)}^2\,,
\]
which implies
\[
  \frac{\int_0^T{\left<\mathscr{B}_\lambda(u(t)),u(t)\right>\,dt}}{\l|u\r|_{L^2(0,T;\H1)}}\geq\min\{\lambda,1\}\l|u\r|_{L^2(0,T;\H1)}\rarr+\infty 
\]
if $\l|u\r|_{L^2(0,T;\H1)}\rarr+\infty$,
and also the last hypothesis is satisfied.


\section{The proof of the second result}
\setcounter{equation}{0}

First of all, Theorem \ref{lemma} tells us that for each $\lambda>0$ there exist  $u_\lambda\in L^2(0,T;H^1(\Omega))$
and $v_\lambda\in H^1(0,T;H^1(\Omega)')\cap L^2(0,T;\L2)$ such that conditions \eqref{lem1}--\eqref{lem3} hold. In particular,
\eqref{lem1} can be written as follows:
\beq
  \label{eq_lam}
    \begin{split}
      \left<\frac{\partial v_\lambda}{\partial t}(t),z\right>&+\lambda\int_\Omega{u_\lambda(t)z}+\int_\Omega{\nabla u_\lambda(t)\cdot\nabla z\, dx}+
      \int_{\Gamma_1}{\beta_\lambda(u_\lambda(t))z\,ds}\\
      &=\int_\Omega{g(t)z\,dx}+\int_{\Gamma_1}{h(t)z\,ds}
      \quad \forall z\in H^1(\Omega)\,, \quad \text{for a.e. }t\in(0,T)\,. 
    \end{split}
\eeq
We now want to obtain some estimates on $u_\lambda$ and $\beta_\lambda(u_\lambda)$ independent of $\lambda$, and then 
we will look for a solution of our original problem by taking the limit as $\lambda\rarr0^+$.
In this sense, the argument we are going to rely on needs a higher regularity of $v_\lambda$, i.e.
\beq
  \label{ipbis_v}
  \frac{\partial v_\lambda}{\partial t}\in L^2(0,T;\L2)\,, \quad\text{for } \lambda>0\,,
\eeq
which is not generally ensured. Thus, the idea is to accurately approximate $u_0$ and $h$ with some $\{u_{0,\lambda}\}$ and $\{h_\lambda\}$, in order to gain the required regularity \eqref{ipbis_v}, and then exploit it in developing our argument. Indeed, we show uniform estimates on $u_\lambda$ and $\beta_\lambda(u_\lambda)$
and check that such estimates are independent of both $\lambda$ and the approximations of the data.
In this perspective, we present a first result.
\begin{lem}
  \label{lem_bis}
  If $u_0,v_0\in\H1$ and $h\in H^1(0,T;L^2(\Gamma_1))$, the solution components $u_\lambda$ and $v_\lambda$ of the problem \eqref{lem1}--\eqref{lem3} satisfy
  \beq
    \label{reg_vlam}
    u_\lambda, v_\lambda\in H^1(0,T;\L2)\cap L^\infty(0,T;\H1)\,, \quad\forall\lambda>0\,.
  \eeq
\end{lem}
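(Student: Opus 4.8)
The plan is to read \eqref{reg_vlam} off a single a priori estimate obtained by testing the $\lambda$-equation \eqref{eq_lam} (formally) with $z=\partial_t u_\lambda$, and then to justify that test rigorously. Throughout I use that we are already inside the proof of Theorem~\ref{theorem}, so that the bi-Lipschitz hypothesis \eqref{bilip} is available: since $\gamma$ and $\gamma^{-1}$ are both Lipschitz and $\gamma$ is monotone, there is $M>0$ with
\[(\gamma(a)-\gamma(b))(a-b)\geq \tfrac1M|a-b|^2 \qquad\forall a,b\in\Ar.\]
Because $v_\lambda=\gamma(u_\lambda)$, this yields the crucial coercive lower bound $\int_\Omega \partial_t v_\lambda\,\partial_t u_\lambda\,dx\geq \tfrac1M\nL2{\partial_t u_\lambda}^2$, which is exactly what will let the (merely $L^2$-in-time) datum $g$ be absorbed.

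Testing \eqref{eq_lam} with $\partial_t u_\lambda$ and integrating on $(0,t)$, the elliptic and penalty contributions become exact time derivatives, giving
\[\tfrac1M\int_0^t\nL2{\partial_t u_\lambda}^2\,d\tau+\tfrac\lambda2\nL2{u_\lambda(t)}^2+\tfrac12\nL2{\nabla u_\lambda(t)}^2+\int_{\Gamma_1}\hat{\beta}_\lambda(u_\lambda(t))\,ds\leq R(t),\]
where $\hat{\beta}_\lambda$ is the convex, nonnegative primitive of $\beta_\lambda$ (with quadratic bound $\hat{\beta}_\lambda(r)\leq\frac{1}{2\lambda}r^2$ since $\beta_\lambda(0)=0$) and $R(t)$ collects the initial values and the two forcing terms. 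The initial contribution $\tfrac\lambda2\nL2{u_0}^2+\tfrac12\nL2{\nabla u_0}^2+\int_{\Gamma_1}\hat{\beta}_\lambda(u_0)\,ds$ is finite because $u_0\in\H1$ (here I use $u_\lambda(0)=u_0$, which follows from $v_\lambda(0)=v_0=\gamma(u_0)$ and the invertibility of $\gamma$). The bulk term $\int_0^t\int_\Omega g\,\partial_t u_\lambda$ is handled by Young's inequality, with the weight chosen so as to absorb $\tfrac{1}{2M}\int_0^t\nL2{\partial_t u_\lambda}^2$ into the left-hand side, leaving the finite remainder $\tfrac{M}{2}\l|g\r|_{L^2(0,t;\L2)}^2$; thus $g\in L^2(0,T;\L2)$ suffices and no time regularity of $g$ is needed.

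The boundary forcing is where the hypothesis $h\in H^1(0,T;L^2(\Gamma_1))$ enters: being a trace datum, $h$ cannot be paired against $\partial_t u_\lambda$ directly, so I integrate $\int_0^t\int_{\Gamma_1}h\,\partial_t u_\lambda$ by parts in time, producing $\int_{\Gamma_1}h(t)u_\lambda(t)\,ds-\int_{\Gamma_1}h(0)u_0\,ds-\int_0^t\int_{\Gamma_1}\partial_\tau h\,u_\lambda\,ds\,d\tau$. Using $h\in C([0,T];L^2(\Gamma_1))$, the trace inequality \eqref{stimh1} and Young, the endpoint term is absorbed into $\tfrac12\nL2{\nabla u_\lambda(t)}^2+\tfrac\lambda2\nL2{u_\lambda(t)}^2$ up to a finite constant, while the remaining terms are finite once one recalls that $u_\lambda\in L^2(0,T;\H1)$ is already granted by Theorem~\ref{lemma}. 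Hence $R(t)$ is bounded uniformly on $[0,T]$, and the estimate yields at once $\partial_t u_\lambda\in L^2(0,T;\L2)$ and $u_\lambda\in L^\infty(0,T;\H1)$, i.e. $u_\lambda\in H^1(0,T;\L2)\cap L^\infty(0,T;\H1)$. The same membership for $v_\lambda$ follows from $v_\lambda=\gamma(u_\lambda)$ and the Lipschitz chain rule, $\partial_t v_\lambda=\gamma'(u_\lambda)\partial_t u_\lambda$ and $\nabla v_\lambda=\gamma'(u_\lambda)\nabla u_\lambda$ being controlled by the Lipschitz constant of $\gamma$.

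The main obstacle is purely the justification of the test with $\partial_t u_\lambda$: a priori $u_\lambda$ is only in $L^2(0,T;\H1)$ with $\partial_t v_\lambda\in L^2(0,T;\H1')$, so $\partial_t u_\lambda$ need not be an admissible ($\L2$-valued) test function. I would \emph{not} difference \eqref{eq_lam} in time, since the increment of $g$ would require a control on $\partial_t g$ that is not assumed. Instead I would run the computation on an implicit time-discretization (Rothe scheme) of the $\lambda$-problem --- at each step an elliptic problem solvable exactly as in the verification of [B1] --- testing the discrete equation with the discrete increment $(u^n-u^{n-1})/\tau$, in which $g$ enters only through its average over the step and is never differentiated; the resulting discrete energy bounds are uniform in $\tau$ and pass to the limit to give \eqref{reg_vlam}, the limit being identified with $(u_\lambda,v_\lambda)$ by uniqueness for fixed $\lambda$. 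Equivalently, since $\Bl=\partial\psi_\lambda$ is a subdifferential and $u_0\in D(\psi_\lambda)=\H1$, one may invoke the regularity theory of Brezis for subdifferential evolution equations, adapted to the strongly monotone time-derivative $\partial_t\gamma(u_\lambda)$.
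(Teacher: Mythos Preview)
Your proposal is correct and follows essentially the same route as the paper: both test \eqref{eq_lam} formally with $\partial_t u_\lambda$, exploit \eqref{bilip} for the coercive lower bound on $\int\partial_t v_\lambda\,\partial_t u_\lambda$, turn the elliptic and $\beta_\lambda$ contributions into exact time derivatives, treat $g$ by Young and $h$ by a time integration by parts using $h\in H^1(0,T;L^2(\Gamma_1))$. The only differences are cosmetic---the paper closes with Gronwall whereas you invoke the already available $L^2(0,T;\H1)$ bound on $u_\lambda$, and the paper simply waves at ``a further regularization'' for the rigorous justification where you spell out a Rothe scheme.
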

\begin{proof}
  Let us proceed in a formal way, taking directly $z=\frac{\partial u_\lambda}{\partial t}$ in \eqref{eq_lam} although the regularity of $u_\lambda$
  does not allow so (actually, a rigorous approach would require a further regularization, which is not restrictive if we keep in mind our goal).
  Taking \eqref{bilip} into account, we have that
  \[
  \int_0^t\int_\Omega{\frac{\partial v_\lambda}{\partial t}(r)\frac{\partial u_\lambda}{\partial t}(r)\,dxdr}\geq
  c_\gamma\int_0^t\int_\Omega{\left|\frac{\partial u_\lambda}{\partial t}(r)\right|^2\,dxdr}\,,
  \]
  where $c_\gamma$ is the Lipshitz constant of $\gamma^{-1}$,
  while
  \[
  \begin{split}
  &\int_0^t\int_\Omega{\left(\lambda u_\lambda(r)\frac{\partial u_\lambda}{\partial t}(r)+
  \nabla{u_\lambda}(r)\cdot\nabla{\frac{\partial u_\lambda}{\partial t}(r)}\right)\,dxdr}\\
  &=\frac{\lambda}{2}\int_\Omega{\left| u_\lambda(t)\right|^2\,dx}+\frac{1}{2}\int_\Omega{\left|\nabla u_\lambda(t)\right|^2\,dx}
  -\frac{\lambda}{2}\int_\Omega{\left| u_0\right|^2\,dx}-\frac{1}{2}\int_\Omega{\left|\nabla u_0\right|^2\,dx}\,,
  \end{split}
  \]
  and
  \[
  \int_0^t\int_{\Gamma_1}{\beta_\lambda(u_\lambda(r))\frac{\partial u_\lambda}{\partial t}(r)\,dsdr}=
  \int_{\Gamma_1}{\hat{\beta_\lambda}(u_\lambda(t))\,ds}-\int_{\Gamma_1}{\hat{\beta_\lambda}(u_0)\,ds}\,.
  \]
  Furthermore, thanks to the Young inequality we have that
  \[
  \int_0^t\int_\Omega{g(r)\frac{\partial u_\lambda}{\partial t}(r)\,dxdr}\leq
  \frac{c_\gamma}{2}\int_0^t\int_\Omega{\left|\frac{\partial u_\lambda}{\partial t}(r)\right|^2\,dxdr}+\frac{1}{2c_\gamma}\int_0^t\int_\Omega{\left|g(r)\right|^2\,dxdr}\,,
  \]
  while an integration by parts leads to
  \[
  \int_0^t\int_{\Gamma_1}{h(r)\frac{\partial u_\lambda}{\partial t}(r)\,dsdr}=\int_{\Gamma_1}{h(t)u_\lambda(t)\,ds}
  -\int_{\Gamma_1}{h(0)u_0\,ds}-\int_0^t\int_{\Gamma_1}{\frac{\partial h}{\partial t}(r)u_\lambda(r)\,dsdr}
  \]
  Taking all these considerations into account and using the fact that $\hat{\beta_\lambda}\geq0$, we obtain
  \[
  \begin{split}
  c_\gamma&\int_0^t\int_\Omega{\left|\frac{\partial u_\lambda}{\partial t}(r)\right|^2\,dxdr}+
  \min\left\{\frac{\lambda}{2},\frac{1}{2}\right\}\nH1{u_\lambda(t)}^2\leq
  \frac{\lambda}{2}\int_\Omega{\left| u_0\right|^2\,dx}\\
  &+\frac{1}{2}\int_\Omega{\left|\nabla u_0\right|^2\,dx}
  +\int_{\Gamma_1}{\hat{\beta_\lambda}(u_0)\,ds}+\frac{1}{2c_\gamma}\l|g\r|_{L^2(0,T;\L2)}+\int_{\Gamma_1}{\left|h(0)u_0\right|\,ds}\\
  &+\frac{c_\gamma}{2}\int_0^t\int_\Omega{\left|\frac{\partial u_\lambda}{\partial t}(r)\right|^2\,dxdr}
  +\int_{\Gamma_1}{h(t)u_\lambda(t)\,ds}+\int_0^t\int_{\Gamma_1}{\left|\frac{\partial h}{\partial t}(r)u_\lambda(r)\right|\,dsdr}\,;
  \end{split}
  \]
  owing to the Young inequality and \eqref{stimh1}, since $h\in C^0([0,T];L^2(\Gamma_1))$, we have
  \[
  \int_{\Gamma_1}{h(t)u_\lambda(t)\,ds}\leq \frac{1}{2\epsilon}\l|h\r|_{C^0([0,T];L^2(\Gamma_1))}^2
  +\frac{\epsilon C_\lambda^2}{2}\nH1{u_\lambda(t)}^2
  \]
  and
  \[
  \int_0^t\int_\Omega{\left|\frac{\partial h}{\partial t}(r)u_\lambda(r)\right|\,dsdr}\leq
  \frac{1}{2}\l|\frac{\partial h}{\partial t}\r|_{L^2(0,T;L^2(\Gamma_1))}^2
  +\frac{C_\lambda^2}{2}\int_0^t{\nH1{u_\lambda(r)}^2\,dr}
  \]
  for some constant $C_\lambda>0$ and for all $\epsilon>0$. Thus, if we choose a sufficiently small $\epsilon$ such that
  $\delta:=\min\{1/2,\lambda/2\}-\epsilon C_\lambda^2/2>0$, we deduce that
  \[
  \frac{c_\gamma}{2}\int_0^t\int_\Omega{\left|\frac{\partial u_\lambda}{\partial t}(r)\right|^2\,dxdr}+
  \delta\nH1{u_\lambda(t)}^2\leq
  \left[\ldots\right]+\frac{C_\lambda^2}{2}\int_0^t{\nH1{u_\lambda(r)}^2\,dr}\,,
  \]
  and the Gronwall lemma ensures that
  \[
  \frac{c_\gamma}{2\delta}\int_0^t\int_\Omega{\left|\frac{\partial u_\lambda}{\partial t}(r)\right|^2\,dxdr}+
  \nH1{u_\lambda(t)}^2\leq \frac{1}{\delta}\left[\ldots\right]
  e^{\frac{C_\lambda^2}{2\delta}T} \quad\text{for a.e. } t\in(0,T)\,.
  \]
Hence, $u_\lambda\in H^1(0,T;\L2)\cap L^\infty(0,T;\H1)$ and thanks to \eqref{bilip} and a classical result by Stampacchia (see \cite{mar-miz})
we also have that $v_\lambda\in H^1(0,T;\L2)\cap L^\infty(0,T;\H1)$.
\end{proof}

As we have anticipated, we consider now some approximations $\{u_{0,\lambda}\}$ and $\{h_\lambda\}$ such that
the following conditions hold:
\beq
  \label{approx1}
  \{u_{0,\lambda}\} \subseteq\H1\,,\quad v_{0,\lambda}=\gamma(u_{0,\lambda})\,, \quad u_{0,\lambda}\rarr u_0 \quad\text{in } \L2\,,
\eeq
\beq
  \label{approx1bis}
  \text{there exists } L>0 \text{ such that }\l|\hat{\beta_\lambda}(u_{0,\lambda)}\r|_{L^1(\Omega)}\leq L \quad\forall\lambda>0\,,
\eeq
\beq
  \label{approx2}
  \{h_\lambda\} \subseteq H^1(0,T;L^2(\Gamma_1))\,, \quad h_\lambda\rarr h \quad\text{in } L^2(0,T;L^2(\Gamma_1))\,.
\eeq
Actually, an approximation $\{u_{0,\lambda}\}$ such that \eqref{approx1}, \eqref{approx1bis} hold exists and a formal proof is given in Subsection \ref{choice}.
Now, thanks to Lemma \ref{lem_bis}, the corresponding solutions $u_\lambda$, $v_\lambda$ given by Theorem \ref{lemma} have the regularity \eqref{reg_vlam}.
We are now ready to prove Theorem \ref{theorem}.

\subsection{The estimate on $u_\lambda$}

It is natural to let $z=u_\lambda(t)\in\H1$ in equality \eqref{eq_lam}: we obtain
\beq
  \label{ulam_1}
  \begin{split}
      \int_\Omega{\frac{\partial v_\lambda}{\partial t}(t)u_\lambda(t)\,dx}&+
      \lambda\nL2{u_\lambda(t)}^2+\nL2{\nabla u_\lambda(t)}^2+
      \int_{\Gamma_1}{\beta_\lambda(u_\lambda(t))u_\lambda(t)\,ds}\\
      &=\int_\Omega{g(t)u_\lambda(t)\,dx}+\int_{\Gamma_1}{h_\lambda(t)u_\lambda(t)\,ds} \quad \text{for a.e. }t\in(0,T)\,.
    \end{split}
\eeq
Note that the duality pairing in \eqref{eq_lam} has become a scalar product, thanks to \eqref{ipbis_v}.
Let's analyse the four terms on the left hand side, separately. First of all, note that since $\beta_\lambda$ is monotone and $\beta_\lambda(0)=0$
we have
\[
  \int_{\Gamma_1}{\beta_\lambda(u_\lambda(t))u_\lambda(t)\,ds}\geq0\,;
\]
furthermore, it is immediate to see that
\[
   \lambda\nL2{u_\lambda(t)}^2+\nL2{\nabla u_\lambda(t)}^2\geq\nL2{\nabla u_\lambda(t)}^2\,.
\]
Now, we focus on the the first term of equation \eqref{ulam_1}: in order to treat it, we recall a known result (for details, see \cite[Lemma~3.3, p.~72]{brezis}).
\begin{prop}
  \label{der_conv}
  Let $H$ be a Hilbert space and $\psi:H\rarr(-\infty.+\infty]$ a proper convex and lower semicontinuous function; then, for all
  $v\in H^1(0,T;H)$ and $u\in L^2(0,T;H)$ such that $(v(t),u(t))\in\partial\psi$ for a.e. $t\in[0,T]$, the function
  $t\mapsto\psi(v(t))$ is absolutely continuous on $[0,T]$ and
  \beq
    \frac{d}{dt}\psi(v(t))=\left(w,v'(t)\right)_H \quad \forall w\in\partial\psi(v(t))\,, \for \text{a.e. } t\in[0,T]\,.
  \eeq
\end{prop}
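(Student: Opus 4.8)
The plan is to regularize $\psi$ by its Moreau--Yosida approximations and pass to the limit, and then to upgrade the resulting formula from the minimal subgradient to an arbitrary one. Write $A:=\partial\psi$ and recall from Remark~\ref{properties} that $\psi_\lambda$ is convex and Fr\'echet-differentiable with differential $A_\lambda$, that $A_\lambda$ is $\tfrac1\lambda$-Lipschitz, that $\psi_\lambda(x)\nearrow\psi(x)$ as $\lambda\searrow0$ (see \eqref{prop_yos}), and that for every $x\in D(A)$ one has $|A_\lambda x|\leq|A^0x|$ with $A_\lambda x\rarr A^0x$. Since $u(t)\in A(v(t))$ for a.e.\ $t$, the curve satisfies $v(t)\in D(A)$ a.e.; hence $A^0(v(t))$ is well defined with $|A^0(v(t))|\leq|u(t)|$, and by \eqref{domini} also $\psi(v(t))<+\infty$ a.e.

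First I would handle the smooth surrogate. Because $\psi_\lambda$ is continuously differentiable with a globally Lipschitz gradient and $v\in H^1(0,T;H)\hookrightarrow C^0([0,T];H)$ is absolutely continuous, the scalar function $t\mapsto\psi_\lambda(v(t))$ is absolutely continuous and the chain rule gives $\frac{d}{dt}\psi_\lambda(v(t))=\left(A_\lambda(v(t)),v'(t)\right)_H$ a.e.; integrating yields
\beq
  \psi_\lambda(v(t))-\psi_\lambda(v(s))=\int_s^t{\left(A_\lambda(v(r)),v'(r)\right)_H\,dr}\,,\qquad 0\leq s\leq t\leq T\,.
\eeq
I would then pass to the limit $\lambda\searrow0$ with $s,t$ in the full-measure set where $v\in D(A)$: the left-hand side converges by \eqref{prop_yos}, while on the right $A_\lambda(v(r))\rarr A^0(v(r))$ a.e.\ with the integrable domination $|\left(A_\lambda(v(r)),v'(r)\right)_H|\leq|u(r)|\,|v'(r)|\in L^1(0,T)$, so dominated convergence gives
\beq
  \psi(v(t))-\psi(v(s))=\int_s^t{\left(A^0(v(r)),v'(r)\right)_H\,dr}\,.
\eeq
Since the integrand is in $L^1(0,T)$, this identity shows that $t\mapsto\psi(v(t))$ is absolutely continuous and, by Lebesgue differentiation, $\frac{d}{dt}\psi(v(t))=\left(A^0(v(t)),v'(t)\right)_H$ a.e.

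It remains to replace $A^0(v(t))$ by an arbitrary $w\in\partial\psi(v(t))$. Fix a time $t$ of differentiability and any such $w$. Using the subdifferential inequality $\psi(z)\geq\psi(v(t))+\left(w,z-v(t)\right)_H$ with $z=v(t+k)$ and with $z=v(t-k)$, dividing by $k>0$ and by $-k$ respectively, and letting $k\searrow0$ (so that the difference quotients of $v$ converge to $v'(t)$), I obtain the two opposite inequalities $\left(A^0(v(t)),v'(t)\right)_H\geq\left(w,v'(t)\right)_H$ and $\left(A^0(v(t)),v'(t)\right)_H\leq\left(w,v'(t)\right)_H$. Hence $\left(w,v'(t)\right)_H=\left(A^0(v(t)),v'(t)\right)_H=\frac{d}{dt}\psi(v(t))$ for every $w\in\partial\psi(v(t))$, which is exactly the asserted formula.

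The step I expect to be most delicate is the passage to the limit: one must justify the a.e.\ chain rule and absolute continuity for the $C^{1,1}$ surrogates $\psi_\lambda$ and, above all, secure the uniform bound $|A_\lambda(v(r))|\leq|A^0(v(r))|\leq|u(r)|$ that makes dominated convergence applicable. Once the identity with $A^0$ is in hand, the final promotion to an arbitrary subgradient is only the short two-sided difference-quotient argument described above.
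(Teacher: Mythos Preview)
The paper does not actually prove this proposition: it is quoted as a known result with a reference to \cite[Lemma~3.3, p.~72]{brezis}, so there is no in-paper proof to compare against. Your argument is correct and is essentially the classical proof found in that reference: regularize via the Moreau--Yosida envelope, use the $C^{1,1}$ chain rule for $\psi_\lambda\circ v$, pass to the limit by dominated convergence with the bound $|A_\lambda(v(r))|\leq|A^0(v(r))|\leq|u(r)|$, and then promote $A^0(v(t))$ to an arbitrary $w\in\partial\psi(v(t))$ via the two-sided subdifferential inequality. Two small remarks: (i) the convergence $\psi_\lambda(v(t))\to\psi(v(t))$ holds for \emph{every} $t$ by \eqref{prop_yos}, so the integral identity is immediately valid for all $s,t\in[0,T]$ and the absolute continuity on the whole interval follows directly; (ii) the last step tacitly uses that $\tfrac{v(t+k)-v(t)}{k}\to v'(t)$ strongly in $H$ for a.e.\ $t$, which is the Lebesgue differentiation theorem for Bochner integrals and is unproblematic here.
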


In our specific case, we know that $v_\lambda(t)\in\partial\phi(u_\lambda(t))$ a.e. on $(0,T)$, or equivalently that
$u_\lambda(t)\in(\partial\phi)^{-1}(v_\lambda(t))$. If we introduce the convex conjugate of $\phi$, defined as
\beq
  \label{con}
  \phi^*:\L2\rarr(-\infty,+\infty]\,, \quad \phi^*(z)=\sup_{y\in\L2}\{(z,y)_{\L2}-\phi(y)\}\,,
\eeq
from the general theory we know that the following relation holds:
\beq
  \label{sub_inv}
  \left(\partial\phi\right)^{-1}=\partial\phi^*\,.
\eeq
Hence, we have that $u_\lambda(t)\in\partial\phi^*(v_\lambda(t))$ for a.e. $t\in[0,T]$ and it is natural to apply
Proposition \ref{der_conv} with the choices (in the notations of the proposition) $H=\L2$ and $\psi=\phi^*$.
Then, Proposition \ref{der_conv} tells us that
\[
  \int_\Omega{\frac{\partial v_\lambda}{\partial t}(t)u_\lambda(t)\,dx}=\frac{d}{dt}\phi^*(v_\lambda(t)) \for \text{a.e. } t\in[0,T]\,.
\]
Taking these remarks into account, from \eqref{ulam_1} we deduce that
\[
  \frac{d}{dt}\phi^*(v_\lambda(t))+\nL2{\nabla u_\lambda(t)}^2\leq
  \int_\Omega{g(t)u_\lambda(t)\,dx}+\int_{\Gamma_1}{h_\lambda(t)u_\lambda(t)\,ds} \quad \text{for a.e. }t\in(0,T)\,;
\]
furthermore, using the Hölder inequality and equation \eqref{stimh1}, the right hand side can be estimated by
\[
  \begin{split}
    \int_\Omega{g(t)u_\lambda(t)\,dx}&+\int_{\Gamma_1}{h_\lambda(t)u_\lambda(t)\,ds} \\
    &\leq\nL2{g(t)}\nL2{u_\lambda(t)}+\ngL2{h_\lambda(t)}\ngL2{u_\lambda(t)} \\
    &\leq\left(\nL2{g(t)}+C\ngL2{h_\lambda(t)}\right)\nH1{u_\lambda(t)}\,.
  \end{split}
\]
Hence, by integrating with respect to time, we easily obtain
\beq
  \label{ulam_2}
  \begin{split}
    \phi^*(v_\lambda(t))&+\l|\nabla u_\lambda\r|^2_{L^2(0,t;\L2)}\leq \\
    &\leq\phi^*(v_{0,\lambda})+\int_{(0,t)}{\left(\nL2{g(s)}+C\ngL2{h_\lambda(s)}\right)\nH1{u_\lambda(s)}\,ds}\,.
  \end{split}
\eeq
Please note that conditions \eqref{approx1}, \eqref{bilip} and \eqref{approx2} imply
\beq
\label{lim_1}
  \phi^*(v_{0,\lambda})=\int_\Omega{v_{0,\lambda}u_{0,\lambda}\,dx}-\phi(u_{0,\lambda})\leq\nL2{v_{0,\lambda}}\nL2{u_{0,\lambda}}\leq M_1
  \quad\forall\lambda>0
\eeq
\beq
\label{lim_2}
   \l|h_\lambda\r|_{L^2(0,T;\gL2)}\leq M_2 \quad\forall\lambda>0
\eeq
for some positive constants $M_1$ and $M_2$, independent of $\lambda$.

We would like now to find an estimate from below of the term $\phi^*(v_\lambda(t))$: at this purpose, 
let $c_\gamma$ and $C_\gamma$ be the Lipschitz constants of $\gamma^{-1}$ and $\gamma$ respectively.
Then, if $x\in\Ar$, since $\partial\hat{\gamma}=\gamma$ we have
\[
  \hat{\gamma}(x)+\gamma(x)(z-x)\leq\hat{\gamma}(z) \quad \forall z\in\Ar\,.
\]
Making the particular choice $z=0$, taking into account that $\hat{\gamma}(0)=0$ and $\gamma(0)=0$, we~have
\[
  \hat{\gamma}(x)\leq \gamma(x)x\leq\left|\gamma(x)\right|\left|x\right|\leq C_\gamma\left|x\right|^2\,.
\]
At this point, note also that, if we call $\eta(t)=C_\gamma t^2$ for $t\in\Ar$,
then we have
\[
    \hat{\gamma}^*(y)=\sup_{z\in\Ar}\{zy-\hat{\gamma}(z)\}\geq\sup_{z\in\Ar}\{zy-C_\gamma z^2\}=
    \eta^*(y)=\frac{y^2}{4C_\gamma}\,,
\]
since it is easy to check (using the definition of conjugate function) that $\eta^*(y)=\frac{y^2}{4C_\gamma}$:
hence, we reach at
\[
   \hat{\gamma}^*(\gamma(x))\geq\frac{\left|\gamma(x)\right|^2}{4C_\gamma}\geq
   \frac{c_\gamma^2}{4C_\gamma}\left|x\right|^2\,.
\]
 In particular, this estimate implies that there exists $C_1>0$ such that
for all $u\in\L2$ and $v\in\L2$ such that $v\in\gamma(u)$ a.e. in $\Omega$ we have
\[
  \phi^*(v)\geq C_1\nL2u^2\,.
\]
In our specific case, $v_\lambda(t)\in\gamma(u_\lambda(t))$ almost everywhere, whence
\[
  \phi^*(v_\lambda(t))\geq{C_1}\nL2{u_\lambda(t)}^2\,.
\]
Then, from \eqref{ulam_2} it follows that
\[
  \begin{split}
    &{C_1}\l|u_\lambda(t)\r|_{\L2}^2+\l|\nabla u_\lambda\r|_{L^2(0,t;\L2)}^2\\
    &\leq M_1+\frac{1}{2}\int_0^t{\left(\nL2{g(s)}+C\ngL2{h_\lambda(s)}\right)^2\,ds}+
    \frac{1}{2}\int_0^t{\nH1{u_\lambda(s)}^2\,ds}\,, \\
  \end{split}
\]
which easily implies that for all $t\in(0,T)$
\[
  \begin{split}
    &{C_1}\l|u_\lambda(t)\r|_{\L2}^2+\frac{1}{2}\l|\nabla u_\lambda\r|_{L^2(0,t;\L2)}^2\\
    &\leq\left[M_1+\l|g\r|_{L^2(0,T;\L2)}^2+C^2\l|h_\lambda\r|_{L^2(0,T;\gL2)}^2\right]+
    \frac{1}{2}\int_0^t{\nL2{u_\lambda(s)}^2\,ds}\,.
  \end{split}
\]
Please note that condition \eqref{lim_2} ensures the existence of a positive constant $C_2$, independent of $\lambda$, such that
\[
  \left[\ldots\right]\leq C_2 \quad\forall\lambda>0\,.
\]
In particular, we have
\[
  {C_1}\l|u_\lambda(t)\r|_{\L2}^2\leq C_2+ \frac{1}{2}\int_0^t{\nL2{u_\lambda(s)}^2\,ds}
\]
and the Gronwall lemma ensures that
\[
   {C_1}\l|u_\lambda(t)\r|_{\L2}^2\leq C_2 e^{t/2}\leq C_2 e^{T/2} \quad\forall t\in(0,T)\,.
\]
Hence, we have found that there exists a positive constant $A_1>0$, independent of $\lambda$, such that
\beq
  \label{ulam_3}
  \l|u_\lambda\r|_{L^\infty(0,T;\L2)}\leq A_1 \quad \forall\lambda>0\,.
\eeq
Furthermore, replacing \eqref{ulam_3} in our last inequality it follows that there exists also $A_2>0$, independent of $\lambda$, such that
\beq
  \label{ulam_4}
  \l|u_\lambda\r|_{L^2(0,T;\H1)}\leq A_2 \quad \forall\lambda>0\,,
\eeq
which easily leads, thanks to \eqref{bilip}, to
\beq
  \label{ulam_5}
   \l|v_\lambda\r|_{L^2(0,T;\H1)}\leq A_3 \quad \forall\lambda>0\,,
\eeq
for a positive constant $A_3$, independent of $\lambda$ (note the connection with Remark \ref{rem}).

\subsection{The estimate on $\beta_\lambda(u_\lambda)$}

The idea is now to test equation \eqref{eq_lam} by $z=\beta_\lambda(u_\lambda(t))$: firstly, we have to control
that this is an admissible choice, or in other words that $\beta_\lambda(u_\lambda(t))\in~\H1$. Since $u_\lambda(t)\in\L2$,
$\beta_\lambda$ is $\frac{1}{\lambda}$-lipschitz continuous and $\beta_\lambda(0)=0$, it follows that $\beta_\lambda(u_\lambda(t))\in\L2$.
Furthermore, thanks to the Lipschitz continuity as well, we also have that $\beta_\lambda(u_\lambda(t))\in~\H1$.

Testing now \eqref{eq_lam} by $z=\beta_\lambda(u_\lambda(t))$ we obtain
\beq
  \label{blam_1}
  \begin{split}
      &\int_\Omega{\frac{\partial v_\lambda}{\partial t}(t)\beta_\lambda(u_\lambda(t))\,dx}
      +\lambda\int_\Omega{u_\lambda(t)\beta_\lambda(u_\lambda(t))\,dx}
      +\int_\Omega{\nabla u_\lambda(t)\cdot\nabla\beta_\lambda(u_\lambda(t))\, dx}\\
      &+\ngL2{\beta_\lambda(u_\lambda(t))}^2
      =\int_\Omega{g(t)\beta_\lambda(u_\lambda(t))\,dx}+\int_{\Gamma_1}{h_\lambda(t)\beta_\lambda(u_\lambda(t))\,ds}\,.
    \end{split}
\eeq

Let's handle the different terms of \eqref{blam_1} separately: thanks to the monotonicity of $\beta_\lambda$
and the fact that $\beta_\lambda(0)=0$, we have
\[
  \lambda\int_\Omega{u_\lambda(t)\beta_\lambda(u_\lambda(t))\,dx}\geq0\,,
\]
while the monotonicity of $\beta_\lambda$ implies that
\[
  \int_\Omega{\nabla u_\lambda(t)\cdot\nabla\beta_\lambda(u_\lambda(t))\, dx}\geq0\,.
\]
Let us focus on the first term: integrating with respect to time we have
\[
  \begin{split}
    &\int_0^t\int_\Omega{\frac{\partial v_\lambda}{\partial s}(s)\beta_\lambda(u_\lambda(s))\,dx\,ds}=
    \int_0^t\int_\Omega{\frac{\partial\gamma(u_\lambda)}{\partial s}(s)\beta_\lambda(u_\lambda(s))\,dx\,ds}\\
    &=\int_0^t\int_\Omega{\gamma'(u_\lambda(s))u_\lambda'(s)\beta_\lambda(u_\lambda(s))\,dx\,ds}=
    \int_\Omega{j_\lambda(u_\lambda(t))\,dx}-\int_\Omega{j_\lambda(u_{0,\lambda})\,dx}\,,
  \end{split}
\]
where
\[
  j_\lambda(r):=\int_0^r{\gamma'(s)\beta_\lambda(s)\,ds}\,, \quad r\in\Ar\,.
\]
Now, thanks to \eqref{bilip}, if we let $c_\gamma$ be the Lipschitz-constant of $\gamma^{-1}$, as usual,  we have
\[
  \int_0^t\int_\Omega{\frac{\partial v_\lambda}{\partial s}(s)\beta_\lambda(u_\lambda(s))\,dx\,ds}\geq
  c_\gamma\int_\Omega{\hat{\beta_\lambda}(u_\lambda(t))\,dx}-\int_\Omega{j_\lambda(u_{0,\lambda})\,dx}\,.
\]
Taking all these remarks into account, from equation \eqref{blam_1} we obtain
\beq
  \label{blam_2}
  \begin{split}
    c_\gamma\int_\Omega{\hat{\beta_\lambda}(u_\lambda(t))\,dx}&+\l|\beta_\lambda(u_\lambda)\r|^2_{L^2(0,t;\gL2)}\leq
    \int_\Omega{j_\lambda(u_{0,\lambda})\,dx}\\
    &+\int_0^t{\left[\int_\Omega{g(r)\beta_\lambda(u_\lambda(r))\,dx}
    +\int_{\Gamma_1}{h_\lambda(r)\beta_\lambda(u_\lambda(r))\,ds}\right]\,dr}\,.
  \end{split}
\eeq
Please note that hypotheses \eqref{ip_beta} and \eqref{ip_g} imply that
\[
  \int_0^t{\int_\Omega{g(r)\beta_\lambda(u_\lambda(r))\,dx}\,dr}\leq
  \int_0^t{\l|g(r)\r|_{L^\infty(\Omega)}\left(D_1\int_\Omega{\hat{\beta_\lambda}(u_\lambda(r))\,dx}+D_2\left|\Omega\right|\right)\,dr}\,,
\]
while thanks to the Young inequality we have
\[
  \int_0^t{\int_{\Gamma_1}{h_\lambda(r)\beta_\lambda(u_\lambda(r))\,ds}\,dr}\leq
  \frac{1}{2}\l|h_\lambda\r|_{L^2(0,t;\gL2)}^2+\frac{1}{2}\int_0^t\int_\Omega{\left|\beta_\lambda(u_\lambda(r))\right|^2\,dx\,dr}\,;
\]
substituting in \eqref{blam_2} we obtain
\beq
  \label{blam_3}
  \begin{split}
    c_\gamma\int_\Omega{\hat{\beta_\lambda}(u_\lambda(t))\,dx}&+\frac{1}{2}\l|\beta_\lambda(u_\lambda)\r|^2_{L^2(0,t;\gL2)}\leq
    \int_\Omega{j_\lambda(u_{0,\lambda})\,dx}\\
    &+\frac{1}{2}\l|h_\lambda\r|_{L^2(0,T;\gL2)}^2+D_2\left|\Omega\right|\l|g\r|_{L^1(0,T;L^\infty(\Omega))}\\
    &+D_1\int_0^t{\l|g(r)\r|_{L^\infty(\Omega)}\int_\Omega{\hat{\beta_\lambda}(u_\lambda(r))\,dx}\,dr}\,.
  \end{split}
\eeq
At this point, if $C_\gamma$ is the Lipschitz-constant of $\gamma$, property \eqref{prop_yos} ensures that
\[
  j_\lambda(r)=\int_0^r{\gamma'(s)\beta_\lambda(s)\,ds}\leq C_\gamma\hat{\beta_\lambda}(r)\leq C_\gamma\hat{\beta}(r)\,,
\]
and consequently, thanks to \eqref{approx1bis} and \eqref{lim_2}, equation \eqref{blam_3} implies that
\[
  \begin{split}
    \int_\Omega{\hat{\beta_\lambda}(u_\lambda(t))\,dx}&\leq
    \frac{1}{c_\gamma}\left[C_\gamma L+
    \frac{M_2^2}{2}+
    D_2\left|\Omega\right|\l|g\r|_{L^1(0,T;L^\infty(\Omega))}\right]\\
    &+\frac{D_1}{c_\gamma}\int_0^t{\l|g(r)\r|_{L^\infty(\Omega)}\int_\Omega{\hat{\beta_\lambda}(u_\lambda(r))\,dx}\,dr}\,.
  \end{split}
\]
Thanks to the Gronwall lemma, we deduce that
\[
  \int_\Omega{\hat{\beta_\lambda}(u_\lambda(t))\,dx}\leq
  \frac{1}{c_\gamma}\left[\ldots\right]\exp\left(\frac{D_1}{c_\gamma}\int_0^t{\l|g(r)\r|_{L^\infty(\Omega)}\,dr}\right)\leq
  \frac{1}{c_\gamma}\left[\ldots\right]e^{\frac{D_1}{c_\gamma}\l|g\r|_{L^1(0,T;L^\infty(\Omega))}}\,;
\]
hence, there exists $B_1>0$ such that
\beq
  \label{blam_4}
  \l|\hat{\beta_\lambda}(u_\lambda)\r|_{L^{\infty}(0,T;L^1(\Omega))}\leq B_1 \quad \forall\lambda>0\,.
\eeq
Taking this estimate into account in \eqref{blam_3}, it immediately follows that there is $B_2>0$ such that
\beq
  \label{blam_5}
  \l|\beta_\lambda(u_\lambda)\r|_{L^2(0,T;\gL2)}\leq B_2\quad \forall \lambda>0\,.
\eeq

\subsection{The passage to the limit}

Now, we are concerned with passing to the limit as $\lambda\rarr0^+$ in equation \eqref{eq_lam}. We recall the following result (see \cite[Cor.~4, p.~85]{simon}),
which we are going to use next.
\begin{prop}
  \label{simon}
  Let $X\subseteq B\subseteq Y$ be Banach spaces with compact embedding $X\hookrightarrow B$ and
  let $F\subseteq L^p(0,T;X)$ be a bounded set such that $\partial F/\partial t:=\{\partial f/\partial t: f\in F\}$ 
  is bounded in $L^1(0,T;Y)$ for a given $p\geq1$. Then, $F$ is relatively compact in $L^p\left(0,T;B\right)$.
\end{prop}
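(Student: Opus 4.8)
The plan is to deduce this statement from the Fr\'echet--Kolmogorov--Riesz characterisation of relatively compact subsets of $L^p(0,T;B)$, combined with a single interpolation inequality that encodes the compactness of the embedding $X\hookrightarrow B$. The role of the two different spaces is clear from the outset: the strong space $X$, where $F$ is bounded, will control space-compactness through the compact embedding, while the weak space $Y$, where the time-derivatives live, will control the time-translations.

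First I would record the \emph{Ehrling-type inequality}: since $X\hookrightarrow B$ is compact and $B\hookrightarrow Y$ is continuous, for every $\eta>0$ there is $C_\eta>0$ with
\[
  \|v\|_B\le\eta\,\|v\|_X+C_\eta\,\|v\|_Y\qquad\forall\,v\in X .
\]
This is proved by contradiction: a sequence violating it, normalised to have unit $B$-norm, would be bounded in $X$ and hence, by compactness, convergent in $B$ to a limit of unit norm, while simultaneously being forced to vanish in $Y$, which is absurd because $B\hookrightarrow Y$ is injective. Next I would invoke the characterisation of relative compactness in $L^p(0,T;B)$ for $1\le p<\infty$: a bounded family $F$ is relatively compact if and only if \emph{(a)} for all $0<t_1<t_2<T$ the set of mean values $\{\int_{t_1}^{t_2}f(t)\,dt:\,f\in F\}$ is relatively compact in $B$, and \emph{(b)} the time-translates are uniformly small, i.e. $\sup_{f\in F}\|\tau_h f-f\|_{L^p(0,T-h;B)}\to0$ as $h\to0^+$, where $\tau_h f(\cdot)=f(\cdot+h)$.

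Condition \emph{(a)} is the easy one: by H\"older's inequality the mean values are bounded in $X$, uniformly over $F$, so the compactness of $X\hookrightarrow B$ delivers their relative compactness in $B$ at once. The substance is condition \emph{(b)}. Writing $f(t+h)-f(t)=\int_t^{t+h}\partial_t f(s)\,ds$ and using Tonelli's theorem, the bound on $\partial F/\partial t$ in $L^1(0,T;Y)$ gives
\[
  \sup_{f\in F}\|\tau_h f-f\|_{L^1(0,T-h;Y)}\le h\,\sup_{f\in F}\|\partial_t f\|_{L^1(0,T;Y)}=O(h).
\]
Applying the Ehrling inequality pointwise in time to $v=f(t+h)-f(t)$ and then integrating reduces \emph{(b)} to controlling the translates in the weak space $Y$, the $X$-contribution being absorbed by the uniform $L^p(0,T;X)$ bound once $\eta$ is chosen small.

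\textbf{This is exactly where the main obstacle lies.} For $p=1$ the $O(h)$ estimate above closes the argument immediately. For $p>1$, however, smallness of the translates in $L^1(0,T;Y)$ does \emph{not} upgrade to smallness in $L^p(0,T;Y)$: concentration on thin time-intervals is compatible with both the $L^1(Y)$ translation estimate and the $L^p(X)$ bound, as the elementary example $h^{-1/p}\mathbf 1_{[0,h]}$ shows. The delicate part is therefore to split the time interval into the (necessarily small-measure) region where the translates are large and its complement, estimating the former through the uniform $L^p(0,T;X)$ bound together with the compact embedding, and the latter through the $L^1(Y)$ estimate. This refinement is precisely the content of the result quoted from \cite{simon}, and it is what distinguishes it from the classical Aubin--Lions lemma, where $\partial_t F$ is assumed bounded in $L^r$ with $r>1$ and the naive interpolation in the Lebesgue exponent already suffices. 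Once \emph{(b)} is established, the compactness criterion yields the relative compactness of $F$ in $L^p(0,T;B)$, and the proposition follows.
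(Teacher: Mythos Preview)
The paper does not prove this proposition at all: it is quoted verbatim as a known compactness result and attributed to \cite[Cor.~4, p.~85]{simon}, with no argument given. So there is no ``paper's own proof'' to compare against; the proposition functions purely as a black box in Section~4.3 to extract the strong convergence \eqref{lim_v} from the uniform bounds on $v_\lambda$ and $\partial_t v_\lambda$.

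Your sketch is a faithful outline of Simon's original argument, and you have correctly located the genuine difficulty: for $p>1$ the $L^1(0,T;Y)$ control on the translates does not immediately upgrade to $L^p$, and the naive Ehrling interpolation alone is insufficient. Your diagnosis of why this is the subtle point (and why it distinguishes the result from the classical Aubin--Lions lemma with $r>1$) is accurate. However, you stop short of actually carrying out the splitting you describe --- you gesture at ``the refinement is precisely the content of the result quoted from \cite{simon}'' --- so as written this is an informed proof \emph{plan} rather than a proof. If your intent was to match the paper's level of detail, you have already exceeded it; if your intent was to supply a self-contained proof, the missing piece is the measure-theoretic splitting argument (or, equivalently, Simon's Lemma~8 together with his Theorem~3), which you would need to write out in full.
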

We would like to apply Proposition \ref{simon} with the choices $X=\H1$, $B=H^{1-\delta}(\Omega)$ ($\delta\in(0,1/2)$), $Y=\H1'$,
 $p=2$ and $F=\{v_\lambda\}_{\lambda>0}$
in order to claim that $\{v_\lambda\}_{\lambda>0}$ is bounded in $L^2\left(0,T;H^{1-\delta}(\Omega)\right)$.
In fact, $F$ is bounded thanks to \eqref{ulam_5}; furthermore,
by comparison in equation \eqref{eq_lam}, using conditions \eqref{ulam_4} and \eqref{blam_5}, we find out that
there exists a constant $E>0$, independent of $\lambda$, such that
\[
  \l|\frac{\partial v_\lambda}{\partial t}\r|_{L^2(0,T;\H1')}\leq E \quad \forall\lambda>0\,.
\]
By weak compactness, we infer that 
\beq
  \label{inpiù}
  v_{\lambda_n}\rarrw v \quad\text{in } H^1(0,T;\H1')\cap L^2(0,T;\H1)
\eeq
for a subsequence $\lambda_n\searrow0$. Moreover,
Proposition \ref{simon} holds and it is a standard matter to obtain
\beq
  \label{lim_v}
  v_{\lambda_n}\rarr v \quad\text{in}\quad L^2\left(0,T;H^{1-\delta}(\Omega)\right)\,, \as n\rarr\infty\,.
\eeq
Let now $u_{\lambda_n}=\gamma^{-1}(v_{\lambda_n})$: then, since $\gamma^{-1}$ is Lipschitz continuous (see \eqref{bilip}),
condition \eqref{lim_v} implies that
\beq
  \label{lim_u1}
  u_{\lambda_n}\rarr\gamma^{-1}(v) \quad\text{in}\quad L^2\left(0,T;\L2\right)\,, \as n\rarr\infty\,.
\eeq
Furthermore, \eqref{ulam_4} tells us that there is a subsequence $\lambda_{n_k}\searrow0$ and $u\in L^2\left(0,T;\H1\right)$ such that
\beq
  \label{lim_u2}
  u_{\lambda_{n_k}}\rarrw u \quad\text{in}\quad L^2\left(0,T;\H1\right)\,, \as k\rarr\infty\,.
\eeq
Conditions \eqref{lim_u1} and \eqref{lim_u2} imply that
\beq
  \label{2nd}
  u=\gamma^{-1}(v) \aand v=\gamma(u) \quad\text{a.e. in } (0,T)\times\Omega
\eeq
and the convergence in \eqref{lim_u2} holds for the entire subsequence $\lambda_n$.
At this point, we recall a general result which is useful to us (see \cite[Chapter~1]{lions-mag}).
\begin{prop}
  For all $\delta\in(0,1)$, there is $\alpha_\delta\in(0,1)$ such that
  \beq
    \label{interp}
    \l|z\r|_{H^{1-\delta}(\Omega)}\leq\nL2z^{\alpha_\delta}\nH1z^{1-\alpha_\delta}\,, \quad \forall z\in\H1\,.
  \eeq
\end{prop}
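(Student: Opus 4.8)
The plan is to read \eqref{interp} as the classical multiplicative interpolation inequality between $\L2=H^0(\Omega)$ and $\H1$, realising $H^{1-\delta}(\Omega)$ as the intermediate space corresponding to the parameter $\theta=1-\delta$. Since $\Omega$ is a smooth bounded domain, the theory of Lions--Magenes (see \cite{lions-mag}) provides, for every $\theta\in(0,1)$, the identification $H^{\theta}(\Omega)=[\L2,\H1]_{\theta}$; with the Lions--Magenes convention the fractional norm \emph{is} the interpolation norm. Taking $\theta=1-\delta$ thus gives $H^{1-\delta}(\Omega)=[\L2,\H1]_{1-\delta}$, and the desired estimate reduces to the fundamental inequality of the interpolation functor.

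Concretely, I would invoke the basic estimate valid for any compatible couple $(X_0,X_1)$ and any $\theta\in(0,1)$: every element $x$ of the intermediate space satisfies
\[
  \l|x\r|_{[X_0,X_1]_\theta}\leq \l|x\r|_{X_0}^{1-\theta}\,\l|x\r|_{X_1}^{\theta}\,,
\]
which holds with constant $1$ for the complex method (equivalently for the $K$-functional with exponent $q=\infty$, upon using $K(t,x)\leq\min\{\l|x\r|_{X_0},\,t\l|x\r|_{X_1}\}$ and optimising over $t>0$). Applying this with $X_0=\L2$, $X_1=\H1$ and $\theta=1-\delta$ yields exactly \eqref{interp} with the explicit value $\alpha_\delta=\delta$.

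As a self-contained alternative, one can bypass the abstract machinery through a Stein extension operator $E\colon H^s(\Omega)\rarr H^s(\Ar^n)$, bounded uniformly for $s\in[0,1]$, which reduces matters to the whole space. There the Fourier characterisation $\l|w\r|_{H^s(\Ar^n)}^2=\int_{\Ar^n}(1+|\xi|^2)^s|\hat w(\xi)|^2\,d\xi$ is available, and splitting the integrand as
\[
  (1+|\xi|^2)^{1-\delta}|\hat w|^2=\big(|\hat w|^2\big)^{\delta}\big((1+|\xi|^2)|\hat w|^2\big)^{1-\delta}
\]
and applying H\"older's inequality with exponents $1/\delta$ and $1/(1-\delta)$ produces the inequality on $\Ar^n$ with constant $1$; composing with the extension and restriction then gives \eqref{interp} up to a constant depending only on $\delta$ and $\Omega$, which is immaterial for the compactness argument to follow. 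In either approach the only genuine point is the identification $H^{1-\delta}(\Omega)=[\L2,\H1]_{1-\delta}$, which is exactly where the smoothness of $\partial\Omega$ assumed in \eqref{domain} is used (to secure a bounded extension operator); granting it, the inequality is immediate, so I expect no real computational obstacle here.
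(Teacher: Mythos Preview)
Your proposal is correct and matches the paper's treatment exactly: the paper does not prove this proposition at all but simply records it with a reference to \cite[Chapter~1]{lions-mag}, and your argument is precisely the unpacking of that citation, identifying $H^{1-\delta}(\Omega)=[\L2,\H1]_{1-\delta}$ and invoking the multiplicative interpolation inequality (with the explicit exponent $\alpha_\delta=\delta$). Your remark that a harmless constant may appear if one adopts a different but equivalent norm on $H^{1-\delta}(\Omega)$ is apt, and indeed irrelevant for the subsequent compactness argument.
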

\noindent Now, thanks to estimate \eqref{interp} and conditions \eqref{lim_u1}-\eqref{lim_u2},
 we have that as $k\rarr\infty$
\[
  \begin{split}
  \int_0^T{\l|u_{\lambda_{n_k}}(t)-u(t)\r|^2_{H^{1-\delta}(\Omega)}\,dt}
  &\leq\int_0^T{\nL2{u_{\lambda_{n_k}}(t)-u(t)}^{2\alpha_\delta}\nH1{u_{\lambda_{n_k}}(t)-u(t)}^{2-2\alpha_\delta}\,dt}\\
  &\leq\l|u_{\lambda_{n_k}}-u\r|_{L^2(0,T;\L2)}^{2\alpha_\delta}\l|u_{\lambda_{n_k}}-u\r|_{L^2(0,T;\H1)}^{2(1-\alpha_\delta)}\rarr0\,;
  \end{split}
\]
it follows that
\beq
  \label{lim_u3}
  u_{\lambda_{n_k}}\rarr u \quad\text{in}\quad L^2\left(0,T;H^{1-\delta}(\Omega)\right)\,, \as k\rarr\infty\,.
\eeq
Furthermore, \eqref{lim_u3} implies the convergence for the traces on $\Gamma_1$:
\beq
  \label{lim_u4}
  u_{\lambda_{n_k}}\rarr u \quad\text{in}\quad L^2\left(0,T;\gL2\right)\,, \as k\rarr\infty\,.
\eeq

Let us focus now on $\beta_\lambda(u_\lambda)$: first of all, note that condition \eqref{blam_5} tells us that
there is $\xi\in L^2\left(0,T;\gL2\right)$ such that (possibly considering another subsequence)
\beq
  \label{lim_b1}
  \beta_{\lambda_{n_k}}\left(u_{\lambda_{n_k}}\right)\rarrw\xi \quad\text{in}\quad L^2\left(0,T;\gL2\right)\,, \as k\rarr\infty\,.
\eeq
Hence, since $\beta$ is maximal monotone, the result stated in \cite[Prop.~1.1, p.~42]{barbu} and the conditions \eqref{lim_u4} and \eqref{lim_b1} ensure that
\beq
  \label{1st}
  u\in D(\beta) \aand \xi\in\beta(u) \quad\text{a.e. in } (0,T)\times\Gamma_1\,.
\eeq

We are now almost ready to pass to the limit as $k\rarr\infty$ and complete the proof. Let's recall equation \eqref{eq_lam},
evaluated for $\lambda_{n_k}$, and argue separately on
the different terms as $k\rarr\infty$:
\begin{gather}
  \int_\Omega{\frac{\partial v_{\lambda_{n_k}}(t)}{\partial t}z\,dx}\rarr\left<\frac{\partial v}{\partial t}(t),z\right>
  \quad\text{thanks to \eqref{inpiù}}\\
  {\lambda_{n_k}} \int_\Omega{u_{\lambda_{n_k}}(t)z}\rarr0 \quad\text{thanks to \eqref{lim_u2}}\\
  \int_\Omega{\nabla u_{\lambda_{n_k}}(t)\cdot\nabla z\, dx}\rarr
  \int_\Omega{\nabla u(t)\cdot\nabla z\, dx} \quad\text{thanks to \eqref{lim_u2}}\\
  \int_{\Gamma_1}{\beta_{\lambda_{n_k}}(u_{\lambda_{n_k}}(t))z\,ds}\rarr
  \int_{\Gamma_1}{\xi(t)z\,ds} \quad\text{thanks to \eqref{lim_b1}}\,.
\end{gather}
Hence, passing to the limit as $k\rarr\infty$ in \eqref{eq_lam} we obtain exactly the thesis \eqref{thm1};
furthermore, conditions \eqref{1st} and \eqref{2nd} yield \eqref{thm2}. Finally, \eqref{thm3}
easily follows from \eqref{approx1} and \eqref{init_pair2}. This finishes the proof.

\subsection{The existence of an approximation $u_{0,\lambda}$}
\label{choice}

As we have anticipated, we now want to prove the existence of
an approximation $\{u_{0,\lambda}\}$ such that conditions \eqref{approx1} and \eqref{approx1bis} hold.

For $\lambda>0$, we define $u_\lambda$ as the solution of the following elliptic problem:
\beq
\label{pr_ell}
  \begin{cases}
  u_{0,\lambda}-\lambda\Delta u_{0,\lambda}=u_0 \quad &\text{in $\Omega$}\,,\\
  \frac{\partial u_{0,\lambda}}{\partial n}=0 \quad &\text{on $\Gamma$}\,.
  \end{cases}
\eeq
Actually, a variational formulation of \eqref{pr_ell} is
\beq
\label{pr_ell2}
  \int_\Omega{u_{0,\lambda}z\,dx}+\lambda\int_\Omega{\nabla u_{0,\lambda}\cdot\nabla z\,dx}=
  \int_{\Omega}{u_0 z\,dx} \quad\forall z\in\H1
\eeq
and a direct application of the Lax-Milgram lemma tells that such $u_{0,\lambda}$ exists and is unique in $\H1$.
Now,
it is easy to check that
\beq
  \label{conv1}
  u_{0,\lambda}\rarr u_0 \quad\text{in } \L2\,.
\eeq
Indeed, if we test equation \eqref{pr_ell2} by $z=u_{0,\lambda}$, using the Young inequality we obtain
\[
  \frac{1}{2}\nL2{u_{0,\lambda}}^2+\lambda\nL2{\nabla u_{0,\lambda}}^2\leq\frac{1}{2}\nL2{u_0}^2\,;
\]
hence, there exists $\widetilde{u_0}\in\L2$ and a subsequence $\{u_{0,\lambda_k}\}_{k\in\En}$ such that
\beq
  \label{conv2}
  u_{0,\lambda_k}\rarrw\widetilde{u_0} \quad\text{in $\L2$} \quad \text{as } k\rarr\infty\,,
\eeq
and
\beq
  \label{conv3}
  \lambda u_{0,\lambda_k}\rarr 0 \quad\text{in } \H1 \quad\text{as } k\rarr\infty\,.
\eeq
Taking \eqref{conv2} and \eqref{conv3} into account and letting $k\rarr\infty$ in equation \eqref{pr_ell2}, we have
\[
  \int_{\Omega}{\widetilde{u_0}z\,dx}=\int_\Omega{u_0 z\,dx} \quad\forall z\in\H1
\]
and we can conclude that $\widetilde{u_0}=u_0$ since $\H1$ is dense in $\L2$. 
Then, the identification of the weak limit implies that the entire family $\{u_{0,\lambda}\}$ weakly converges to $u_0$.
Moreover, as we have
\[
  \limsup_{\lambda\searrow0}\int_\Omega{\left|u_{0,\lambda}\right|^2\,dx}\leq\int_\Omega{\left|u_0\right|^2\,dx}\,,
\]
it turns out that \eqref{conv1} holds.
Thus, condition \eqref{approx1} is satisfied for such a choice of $u_{0,\lambda}$ (and clearly $v_{0,\lambda}=\gamma(u_{0,\lambda})$);
we now check that also \eqref{approx1bis} is satisfied.

Let $z=\beta_\lambda(u_{0,\lambda})$ in \eqref{pr_ell2} ($z\in\H1$ since $\beta_\lambda$
is Lipshitz continuous): taking into account the monotonicity of $\beta_\lambda$, we obtain
\beq
\label{disug}
  0\leq\lambda\int_\Omega{\nabla\beta_\lambda(u_{0,\lambda})\cdot\nabla u_{0,\lambda}\,dx}=
  \int_\Omega{\left(u_0-u_{0,\lambda}\right)\beta_\lambda(u_{0,\lambda})\,dx}\,.
\eeq
Moreover, the definition of subdifferential tells us that
\[
  \hat{\beta_\lambda}(u_{0,\lambda})\leq\hat{\beta_\lambda}(u_0)+\left(u_{0,\lambda}-u_0\right)\beta_\lambda(u_{0,\lambda}) \quad\text{a.e. in } \Omega\,;
\]
then, integrating on $\Omega$ and taking \eqref{disug} and \eqref{prop_yos} into account, we have
\[
  \int_\Omega{\hat{\beta_\lambda}(u_{0,\lambda})\,dx}\leq\int_\Omega{\hat{\beta_\lambda}(u_0)\,dx}+
  \int_\Omega{\left(u_{0,\lambda}-u_0\right)\beta_\lambda(u_{0,\lambda})\,dx}\leq
  \int_\Omega{\hat{\beta_\lambda}(u_0)\,dx}\leq\int_\Omega{\hat{\beta}(u_0)\,dx}\,.
\]
Thus, condition \eqref{approx1bis} is satisfied with the choice
\[
  L=\l|\hat{\beta}(u_0)\r|_{L^1(\Omega)}\,.
\]


\section{Continuous dependence and uniqueness}
\setcounter{equation}{0}

In this last section, we aim at proving Theorem \ref{dep-uniq}, which ensures the continuous dependence of the solutions from the data in problem \eqref{thm1}--\eqref{thm3}.
The most significant assumption in this case is the linearity of $\gamma$: if this is not true, a direct result of uniqueness or continuous dependence
is not evident. Thus, let us assume \eqref{gamma_lin}
and consider two sets of data, $\{u_1^0\,, h_1\,, g_1\}$ and $\{u_2^0\,, h_2\,, g_2\}$, where in particular
\beq
  \label{data}
  u_1^0\,, u_2^0\in \L2\,, \quad h_1\,, h_2 \in L^2(0,T;L^2(\Gamma_1))\,,\quad g_1\,, g_2 \in L^2(0,T;\L2)\,.
\eeq
Then, Theorem \ref{theorem} ensures the existence of
\beq
  \label{solutions}
  u_1\,, u_2 \in L^2(0,T;\H1)\,, \quad v_1\,, v_2 \in H^1(0,T;\H1')\cap L^2(0,T;\H1)
\eeq
such that \eqref{thm1}--\eqref{thm3} hold for 
$(u_1\,, v_1\,,\xi_1)$ and $(u_2\,, v_2\,,\xi_2)$.
Taking the difference in \eqref{thm1} and testing by $z=u_1(t)-u_2(t)$, we obtain
\[
  \begin{split}
      &\left<\frac{\partial(v_1-v_2)}{\partial t}(t),(u_1-u_2)(t)\right>+\int_\Omega{\left|\nabla(u_1-u_2)(t)\right|^2\, dx}+
      \int_{\Gamma_1}{(\xi_1-\xi_2)(t)(u_1-u_2)(t)\,ds}\\
      &=\int_\Omega{(g_1-g_2)(t)(u_1-u_2)(t)\,dx}+
      \int_{\Gamma_1}{(h_1-h_2)(t)(u_1-u_2)(t)\,ds}\,;
  \end{split}
\]
hence, recalling \eqref{gamma_lin} and taking into account the monotonicity of $\beta$, this equation implies
\beq
  \label{est1}
  \begin{split}
    &\frac{\alpha}{2}\frac{d}{dt}\int_\Omega{\left|u_1(t)-u_2(t)\right|^2\, dx}+\nL2{\nabla(u_1(t)-u_2(t))}^2\\
    &\leq\int_\Omega{(g_1-g_2)(t)(u_1-u_2)(t)\,dx}+ \int_{\Gamma_1}{(h_1-h_2)(t)(u_1-u_2)(t)\,ds}\,.
  \end{split}
\eeq
If we now integrate \eqref{est1} with respect to time, thanks to \eqref{stimh1} and the Young inequality, for all $t\in[0,T]$ we have
\[
  \begin{split}
    &\frac{\alpha}{2}\nL2{u_1(t)-u_2(t)}^2+\l|{\nabla(u_1-u_2)}\r|^2_{L^2(0,t;\L2)}\leq
    \frac{\alpha}{2}\nL2{u_1^0-u_2^0}^2+\\
    &+\int_0^t{\left(\nL2{g_1(s)-g_2(s)}+C\nL2{h_1(s)-h_2(s)}\right)\nH1{u_1(s)-u_2(s)}\,ds}\\
    &\leq\left[\frac{\alpha}{2}\nL2{u_1^0-u_2^0}^2+
    \l|g_1-g_2\r|_{L^2(0,T;\L2)}^2+C^2\l|h_1-h_2|\r|_{L^2(0,T;L^2(\Gamma_1))}^2\right]+\\
    &+\frac{1}{2}\int_0^t{\l|u_1(s)-u_2(s)\r|_{\H1}^2\,ds}\,,
  \end{split}
\]
which implies
\beq
  \label{est2}
    \begin{split}
      \frac{\alpha}{2}\nL2{u_1(t)-u_2(t)}^2&+\frac{1}{2}\l|{\nabla(u_1-u_2)}\r|^2_{L^2(0,t;\L2)}\\
     &\leq\left[\ldots\right]+\frac{1}{2}\int_0^t{\nL2{u_1(s)-u_2(s)}^2\,ds}\,.
    \end{split}
\eeq
In particular, Gronwall Lemma ensures that
\beq
\begin{split}
  \nL2{u_1(t)-u_2(t)}^2&+\frac{1}{\alpha}\l|{\nabla(u_1-u_2)}\r|^2_{L^2(0,t;\L2)}\\
  &\leq\frac{2}{\alpha}\left[\ldots\right]e^{t/\alpha}\leq\frac{2}{\alpha}\left[\ldots\right]e^{T/\alpha} \quad\text{for all $t\in(0,T)$}\,.
\end{split}
\eeq
Hence, we have shown that
\beq
  \label{est3}
  \begin{split}
    &\l|u_1-u_2\r|_{L^\infty(0,T;\L2)\cap L^2(0,T;\H1)}^2
    \leq\frac{2e^{T/\alpha}}{\alpha\min\{1,1/\alpha\}}\\
    &\times\left[\frac{\alpha}{2}\nL2{u_1^0-u_2^0}^2+
    \l|g_1-g_2\r|_{L^2(0,T;\L2)}^2+C^2\l|h_1-h_2|\r|_{L^2(0,T;L^2(\Gamma_1))}^2\right]
  \end{split}
\eeq
and the continuous dependence result is proved.
The uniqueness is an easy consequence when we consider $u_0^1=u_0^2$, $h_1=h_2$ and $g_1=g_2$.



\end{document}